\newtheorem{theorem}{Theorem} 
\newtheorem{observation}{Observation}
\newtheorem{proposition}{Proposition} 
\newtheorem{lemma}{Lemma} 
\newtheorem{definition}{Definition}
\begin{document}

\title{
Branchwidth is $(1,g)$-self-dual\thanks{
Emails of authors:\!\texttt{
{george.kontogeorgiou@warwick.ac.uk},
{livaditisalex@gmail.com},
{kostaspsa@math.uchicago.edu},
{giannos.stamoulis@lirmm.fr},
{dzoros@math.uoa.gr}}
}
}

\date{}

\author{
Georgios Kontogeorgiou\thanks{Mathematics Institute, University of Warwick, CV4 7AL, UK}
\and
Alexandros Leivaditis\thanks{Mathematical Institute, Universiteit Leiden, Netherlands}
\and
Kostas I. Psaromiligkos\thanks{Department of Mathematics, University of Chicago, Chicago, IL, USA}
\and
Giannos Stamoulis\thanks{LIRMM, Univ Montpellier, CNRS, Montpellier, France and Inter-university Postgraduate Programme “Algorithms, Logic, and Discrete Mathematics” (ALMA). Supported by the French-German Collaboration ANR/DFG Project UTMA (ANR-20-CE92-0027).}
\and Dimitris Zoros\thanks{Department of Mathematics, National and Kapodistrian University of Athens, Athens, Greece.}
 }

\maketitle

\begin{abstract}
A graph parameter is self-dual in some class of graphs embeddable in some surface if its value does not change in the dual graph by more than a constant factor.
We prove that the branchwidth of connected hypergraphs without bridges and loops that are embeddable in some surface of Euler genus at most $g$  is an $(1,g)$-self-dual parameter. This is the first proof that branchwidth is an additively self-dual width parameter.
\end{abstract}

\noindent\textbf{Keywords}: branchwidth, duality, self-dual parameters, surface-embeddable graphs, matroid branchwidth.

\section{Introduction}
A \emph{surface} is a connected compact 2-manifold without boundaries. A surface $\Sigma$ can be obtained, up to homeomorphism, by adding $\mathbf{eg}(\Sigma)$ \emph{crosscaps} to the sphere and $\mathbf{eg}(\Sigma)$
is called the \emph{Euler genus} of $\Sigma$.
An \emph{embedding of a graph} $G$ on a surface $\Sigma$ is a drawing of $G$ on $\Sigma$ without edge crossings. The \emph{Euler genus} of a graph is the smallest number $g\in\mathbb{N}$ such that $G$ can be embedded on a surface of Euler genus $g$.
See the monograph by Mohar and Thomassen~\cite{MoharT01grap} for more on graphs on surfaces.

The \emph{dual graph} of a graph $G$ embedded on a surface $\Sigma$ is the graph $G^*$ embedded on $\Sigma$ that is obtained by considering a vertex $f^*$ for each face $f$ of $G$ and
an edge $e^*$ for every edge $e$, where $e^* = \{f_1^*,f_2^*\}$ and $f_1, f_2$ are the faces of $G$ to which $e$ is incident (if $f_1 = f_2$, then $e^*$ is a loop). Note that
$G^*$ can be embedded on $\Sigma$ by drawing each vertex $f^*\in V(G^*)$ in the interior of the face $f$ of $G$
and every $e^*\in E(G^*)$ crossing the edge $e$ of $G$ exactly once. We call $e^*$ the \emph{dual edge} of $e$.

\begin{definition}		
Let $\Sigma$ be a surface and $\Lambda_{\Sigma}$ the set of all  finite graphs embedded on $\Sigma$. A function $f:\Lambda_{\Sigma}\rightarrow{\mathbb N}$  is called $(a,b)$-self-dual if for every $G\in \Lambda_{\Sigma}$ it holds that $f(G^*)\leq a\cdot f(G)+b$. 
\end{definition}

Self-duality of graph parameters like treewidth, pathwidth, or branchwidth has played a fundamental role in the proof of Wagner's conjecture by Robertson and Seymour~\cite{RobertsonS04GMXX} and has been used to design polynomial-time approximation algorithms for these parameters~\cite{BodlaenderF02appr}.

Treewidth has attracted most of the research concerning self-duality of graph parameters. Lapoire proved~\cite{Lapoire96tree}, using algebraic methods, that treewidth is $(1,1)$-self-dual in planar graphs, settling a conjecture stated by Robertson and Seymour~\cite{RobertsonS84GMIII}. Shorter proofs of this result were given by Bouchitté, Mazoit, and Todinca~\cite{BouchitteMT03chor} and Mazoit~\cite{Mazoit04deco}, exploiting the properties of minimal separators in planar graphs.
Later, Mazoit~\cite{Mazoit12tree} showed that treewidth is $(1,g + 1)$-self-dual in graphs of Euler genus at most $g$.

In the case of pathwidth, Fomin and Thilikos~\cite{FominT07onse} proved that it is a $(6, 6g - 2)$-self-dual parameter in graphs polyhedrically embedded on surfaces of Euler genus at most $g$. This result was improved for planar graphs by Amini, Huc, and Pérennes~\cite{AminiHP09onth} who showed that pathwidth is $(3, 2)$-self-dual in 3-connected planar graphs and $(2, 1)$-self-dual in planar graphs with a Hamiltonian path.

Concerning branchwidth (see~\autoref{subsec:bw} for a formal definition of the branchwidth of a graph), Seymour and Thomas~\cite{SeymourT94call} proved that it is $(1, 0)$-self-dual in planar graphs that are not forests (for more direct proofs, see also~\cite{MazoitT07bran} and~\cite{HicksM07theb}).
For graphs of Euler genus at most $g$,
Sau and Thilikos~\cite{SauT11onse}
proved that branchwidth is $(6, 2g - 4)$-self-dual in graphs of Euler genus at most $g$.
As mentioned in~\cite{SauT11onse}, one can show the $(\frac{3}{2}, g+2)$-self-duality of branchwidth in graphs of Euler genus at most $g$, by combining using the $(1,g + 1)$-self-duality of treewidth in graphs of Euler genus at most $g$~\cite{Mazoit12tree} and the fact that for every graph $G$ with at least three edges, $\mathbf{bw}(G) \leq \mathbf{tw}(G) + 1 \leq \frac{3}{2}\mathbf{bw}(G)$~\cite{RobertsonS91GMX}, where $\mathbf{bw}(G)$ and $\mathbf{tw}(G)$ denote the branchwidth and the treewidth of $G$, respectively.
Sau and Thilikos conjectured~\cite{SauT11onse} that branchwidth is $(1, g)$-self-dual.

We prove this conjecture for connected graphs of Euler genus at most $g$ without bridges and loops. A \emph{bridge} of a graph is an edge whose deletion disconnects the graph, while a \emph{loop} is an edge whose endpoints are the same vertex.

\begin{theorem}\label{thm:main}
The branchwidth of connected graphs of Euler genus at most $g$ without bridges and loops is $(1,g)$-self-dual.\end{theorem}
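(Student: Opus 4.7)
My plan is to pass to matroid branchwidth and exploit its duality. Two classical facts are used: (a) for a connected graph $G$ with at least two edges, $\mathbf{bw}(G)=\mathbf{bw}(M(G))$, where $M(G)$ denotes the cycle matroid; and (b) $\mathbf{bw}(M)=\mathbf{bw}(M^{*})$ for every matroid $M$. On the sphere, the identity $M(G^{*})=M(G)^{*}$ immediately yields the Seymour--Thomas theorem $\mathbf{bw}(G^{*})=\mathbf{bw}(G)$, so the entire new content is in quantifying the failure of this matroid identity in Euler genus $g\ge 1$.

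Starting from a width-$w$ branch decomposition $(T,\tau)$ of $G$, where $w=\mathbf{bw}(G)$, I would consider the naive dualisation $(T,\tau^{*})$ with $\tau^{*}(\ell)=\tau(\ell)^{*}$, and use it as a candidate branch decomposition of $G^{*}$. For any edge $f$ of $T$ inducing the partition $E(G)=E_{1}\cup E_{2}$, the primal middle set $\mu(f)$ consists of vertices of $G$ meeting both $E_{1}$ and $E_{2}$, while the dual middle set $\mu^{*}(f)$ consists of faces of $G$ meeting both $E_{1}$ and $E_{2}$. Passing through the matroid connectivity function $\lambda_{M}(X)=r(X)+r(E\setminus X)-r(E)+1$, these two quantities become $\lambda_{M(G)}(E_1)$ and $\lambda_{M(G^{*})}(E_1)$ respectively. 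The key step is to prove
\[
\lambda_{M(G^{*})}(E_1)\le \lambda_{M(G)^{*}}(E_1)+g
\]
for every bipartition $(E_1,E_2)$; combined with the self-duality $\lambda_{M(G)^{*}}=\lambda_{M(G)}$, this gives $|\mu^{*}(f)|\le |\mu(f)|+g$ for every $f$, and hence $\mathbf{bw}(G^{*})\le w+g$.

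To establish this inequality I would use an Eppstein-style tree-cotree decomposition of the embedding: one can find a spanning tree $A$ of $G$ and a set $B\subseteq E(G)\setminus A$ such that $B^{*}=\{b^{*}:b\in B\}$ is a spanning tree of $G^{*}$, with $E(G)=A\cup B\cup S$ and $|S|=g$. The $g$ residual edges $S$ encode the handle cycles of the embedding and exactly measure the deviation between $M(G)^{*}$ and $M(G^{*})$ (under the identification $e\leftrightarrow e^{*}$). A direct rank computation using the spanning structures $A$ and $B^{*}$ should then bound $\lambda_{M(G^{*})}(E_1)-\lambda_{M(G)^{*}}(E_1)$ by the number of handle edges straddling the bipartition, which is at most $|S|=g$.

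The main obstacle is this rank computation, which demands careful tracking of how the embedded subgraphs $G[E_1]$ and $G[E_2]$ share faces across the handles of $\Sigma$; sign and double-counting errors in the face tally are the typical pitfall. The hypotheses of connectedness and absence of bridges and loops enter at two places: first, to legitimately invoke the equality $\mathbf{bw}(G)=\mathbf{bw}(M(G))$ and its dual counterpart; second, to guarantee that the tree-cotree decomposition behaves as advertised, since bridges become coloops and loops become matroid loops, both of which would force degenerate bookkeeping in the rank calculation. Once the defect inequality is proven, chaining matroid self-duality yields $\mathbf{bw}(G^{*})=\mathbf{bw}(M(G^{*}))\le \mathbf{bw}(M(G)^{*})+g=\mathbf{bw}(M(G))+g=\mathbf{bw}(G)+g$, which is Theorem~\ref{thm:main}.
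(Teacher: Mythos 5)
Your overall architecture coincides with the paper's: pass to the cycle matroid via the Mazoit--Thomass\'e equality $\mathbf{bw}(G)=\mathbf{bw}(M(G))$ for connected bridgeless graphs, use the self-duality of the matroid connectivity function, and reduce everything to the single per-bipartition inequality $\lambda_{M(G^*)}(E_1^*)\le \lambda_{M(G)}(E_1)+g$ (which is exactly the paper's $|\mu_G(F)-\mu_{G^*}(F^*)|\le g$). Where you genuinely diverge is in how that inequality is established. The paper proves it by an iterative reduction: starting from a \emph{connected} branch decomposition, it repeatedly contracts edges of $F$ whose duals are bridges of $G^*[F^*]$ and deletes duals of edges not contained in $\partial(F)$, tracking via Lemmas~\ref{lem:contract} and~\ref{lem:remove} that the gap is preserved, and invoking the topological fact (Proposition~\ref{prop:cycles}) that at most $g$ disjoint closed curves can leave the surface connected to bound the residue left at the end. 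You instead propose a static, direct computation of the rank defect between $M(G^*)$ and $M(G)^*$ using a tree--cotree decomposition. That route is viable and arguably more transparent: the needed facts are $r_{M(G)^*}(X)-g\le r_{M(G^*)}(X^*)\le r_{M(G)^*}(X)$ for every $X\subseteq E(G)$, together with the exact value $r_{M(G^*)}(E^*)=r_{M(G)^*}(E)-g$, all of which follow from the Euler inequality $|V|-|E'|+f'\ge 1+c-g$ (with the spherical upper bound $1+c$) applied to the embedded spanning subgraph $(V,E\setminus X)$. Your approach buys a shorter, non-iterative argument that does not need connected branch decompositions at the level of the defect bound; the paper's approach buys finer structural information (which edges are responsible for the defect) at the cost of a longer case analysis.

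Two cautions. First, the crucial computation is the one thing you have not actually carried out (``should then bound''), and it is exactly where the constant is decided: if you only establish the two-sided bound $|r_{M(G^*)}(X^*)-r_{M(G)^*}(X)|\le g$ for each of the three rank terms in $\lambda$ independently, you get $3g$, not $g$. You need the one-sided inequality $r_{M(G^*)}(X^*)\le r_{M(G)^*}(X)$ for $E_1$ and $E_2$ together with the exact drop of $g$ for the full edge set; only then does the telescoping give $-d_1-d_2+g\le g$. Second, your stated hypothesis for $\mathbf{bw}(G)=\mathbf{bw}(M(G))$ (``connected with at least two edges'') is not the right one --- bridgelessness is what is required, as you correctly note later; make sure the final write-up invokes the equality under the correct hypothesis for both $G$ and $G^*$ (the latter is where looplessness of $G$ enters, via Lemma~\ref{lem:no-bridge-dual}).
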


The proof of~\autoref{thm:main} is presented in~\autoref{sec:selfdual-graphs}.
Our approach is to first show that the \emph{matroid branchwidth} of a connected bridgeless graph of Euler genus at most $g$ is $(1,g)$-self-dual. 
The matroid branchwidth of a graph $G$ is a parameter defined in~\cite{MazoitT07bran}, which we denote by $\mu\text{-}\mathbf{bw}(G)$, and evaluates over branch decompositions of $G$, whose width is not measured in terms of border size of a bipartition of the edge set (like branchwidth) but using the \emph{matroid connectivity function}~\cite{Oxley11matr} on this bipartition (see~\autoref{subsec:mbw} for formal definitions). 
After proving the  $(1,g)$-self-duality of matroid branchwidth on connected bridgeless graphs of Euler genus at most $g$, to prove~\autoref{thm:main} one can use the fact that
$\mathbf{bw}(G) = \mu\text{-}\mathbf{bw}(G)$, if $G$ is connected and bridgeless~\cite{MazoitT07bran} (see~\autoref{prop:bw-mbw}). \autoref{thm:main} is stated for loopless graphs because, in order to guarantee that $\mathbf{bw}(G^*) = \mu\text{-}\mathbf{bw}(G^*)$, one has to ensure that $G^*$ is bridgeless, and the latter is true if $G$ has no loops (see~\autoref{lem:no-bridge-dual}).

\paragraph{From graphs to hypergraphs.}
\autoref{thm:main} also extends to hypergraphs. Before stating the result, let us give some additional definitions on embeddings of hypergraphs and their duals.
A \emph{hypergraph} is a pair $(V,E)$ of sets,
where the elements of $E$ are non-empty subsets
of $V$.
We call the set $E$ the set of \emph{hyperedges} of $(V,E)$.

An \emph{open arc} $A$ on a surface $\Sigma$ is a subset of $\Sigma$ homeomorphic to $(0,1)$. The set of its \emph{endpoints} is defined to be the set $\bar{A}\setminus A$, where $\bar{A}$ denotes the closure of $A$. 
A connected subset $X$ of a surface $\Sigma$ is a \emph{star} if it contains a point $v_x$ called its \emph{center} such that $X\setminus \{v_x\}$ is a union of pairwise disjoint open arcs called \emph{half-edges}. The set of \emph{endpoints of a star} is the set $\bar{X}\setminus X$.  
An \emph{embedding $\Lambda$ of a hypergraph $G$} on a surface $\Sigma$ is a subset of $\Sigma$ consisting of two disjoint sets, $V_G$ and $E_G$, where $V_G$ is a set of points of $\Sigma$ associated bijectively with the vertices of $G$ and $E_G$ is a set of pairwise disjoint stars associated bijectively with the hyperedges of $G$ such that
the endpoints of a star are associated with the elements of the hyperedge that is associated with said star.
We will be using a common symbol for both the hypergraph and its embedding. A \emph{face} of an embedded hypergraph $G$ is a connected component of $\Sigma\setminus G$.

The \emph{dual of a hypergraph} $G$ embedded on $\Sigma$ is a hypergraph $G^*$ which has exactly one vertex lying in the interior of each face of $G$ and exactly one star $s^*$ aligned over each star $s$ of $G$ such that their centres coincide and the endpoints of $s^*$ (which may in pairs coincide) are the vertices of $G^*$ lying in those faces of $G$ that are incident to $s$ (these also may in pairs coincide). We now state our main result in its full generality.

\begin{theorem}
\label{main_result}
The branchwidth of connected hypergraphs, without loops and bridges, that are embeddable on a surface of Euler genus at most $g$ is $(1,g)$-self-dual.
\end{theorem}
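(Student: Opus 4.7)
The plan is to deduce \autoref{main_result} from \autoref{thm:main} via the \emph{incidence graph} construction. Given a hypergraph $H$ embedded on $\Sigma$, let $\mathcal{I}(H)$ be the graph embedded on $\Sigma$ obtained by introducing a new vertex $c_s$ at the centre of every star $s\in E(H)$ and interpreting each half-edge of $s$ as an ordinary edge $\{c_s,v\}$ to its endpoint $v$. Since $\mathcal{I}(H)$ occupies the same point-set of $\Sigma$ as $H$, it has Euler genus at most $g$ and the same face structure. As a preliminary I would verify that $\mathcal{I}(H)$ inherits connectedness, looplessness, and bridgelessness from $H$: for the last of these, if $s$ is not a bridge of $H$ then any path in $H-s$ between two endpoints $u,v$ of $s$ lifts, together with the spoke $\{c_s,u\}$, to a cycle of $\mathcal{I}(H)$ passing through the spoke $\{c_s,v\}$.

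Next I would transfer all comparisons to matroid branchwidth, setting $\mu\text{-}\mathbf{bw}(H):=\mu\text{-}\mathbf{bw}(\mathcal{I}(H))$, which matches the usual matroid-theoretic reading of a hypergraph. This intermediate layer is needed because the naive identity $\mathbf{bw}(H)=\mathbf{bw}(\mathcal{I}(H))$ can fail for hypergraphs with few hyperedges (e.g.\ two hyperedges on the same vertex set), while the matroid identity is robust. I would then prove a hypergraph analogue of \autoref{prop:bw-mbw} asserting that $\mathbf{bw}(H)=\mu\text{-}\mathbf{bw}(H)$ for connected, loopless, bridgeless hypergraphs, and the same for $H^*$ (whose bridgelessness follows from the hypergraph analogue of \autoref{lem:no-bridge-dual} using that $H$ has no loops).

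The third and most delicate step is to compare $\mathcal{I}(H^*)$ with the graph dual $\mathcal{I}(H)^*$. These two graphs share the same face-vertices but differ locally at each star-centre $c_s$ of $H$: in $\mathcal{I}(H^*)$, the vertex $c_s$ is the centre of a star of $k_s$ spokes going to the face-vertices of the face-corners of $H$ around $c_s$, while in $\mathcal{I}(H)^*$ those same face-vertices are joined instead by a closed walk of length $k_s$ arising as the dual cycle surrounding $c_s$. I would argue that this local star-to-cycle replacement preserves the matroid connectivity function of the underlying cycle matroid, and therefore preserves matroid branchwidth, giving $\mu\text{-}\mathbf{bw}(\mathcal{I}(H^*))=\mu\text{-}\mathbf{bw}(\mathcal{I}(H)^*)$.

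Combining the above with \autoref{thm:main} applied to $\mathcal{I}(H)$ yields
\[\mathbf{bw}(H^*)=\mu\text{-}\mathbf{bw}(\mathcal{I}(H^*))=\mu\text{-}\mathbf{bw}(\mathcal{I}(H)^*)\leq\mu\text{-}\mathbf{bw}(\mathcal{I}(H))+g=\mathbf{bw}(H)+g,\]
which is \autoref{main_result}. The main obstacle will be the third step: the cycle matroid of a star and the cycle matroid of a cycle on the same leaf-set are genuinely different matroids, so the invariance of matroid branchwidth under the star-to-cycle move must be established by a careful global argument exploiting how these local gadgets interact with the rest of the graph, rather than as a purely local matroid isomorphism. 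The remaining technical work is to push through the hypergraph analogues of the graph lemmas on bridges under duality and on the equality $\mathbf{bw}=\mu\text{-}\mathbf{bw}$.
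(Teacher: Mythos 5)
Your high-level strategy coincides with the paper's: pass to the incidence graph, apply \autoref{thm:main} to it, and translate back. The decisive difference is the \emph{level} at which you compare objects, and this is where a genuine gap opens. The paper never compares the parameters $\mathbf{bw}(H)$ and $\mathbf{bw}(I(H))$, nor the parameters of $I(H^*)$ and $I(H)^*$; it compares \emph{specific} branch decompositions via the border function $\delta$. Concretely, \autoref{lemma1} shows $w_\delta(T,\lambda)=w_\delta(\mathsf{Inc}(T),\mathsf{Inc}(\lambda))$ for the induced star-respecting decomposition, and \autoref{lemma2} shows $w_\delta(\mathsf{Inc}(T),\mathsf{Inc}(\lambda)^*)=w_\delta(\mathsf{Inc}(T),\mathsf{Inc}(\lambda^*))$; the star-versus-cycle discrepancy you worry about is invisible there, because for a union of whole stars the two borders are literally the same vertex set (the centres never lie in a border), and any tree edge inside a half-edge subtree is dominated by the adjacent whole-star edge. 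Since only one inequality in each direction is needed ($\mathbf{bw}(G)\geq\dots\geq\mathbf{bw}(G^*)-g$), no claim of the form ``the two parameters are equal'' is ever required.

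Your step 3, by contrast, asserts the parameter identity $\mu\text{-}\mathbf{bw}(\mathcal{I}(H^*))=\mu\text{-}\mathbf{bw}(\mathcal{I}(H)^*)$ and you correctly flag it as unproven; this is the gap. The identity is plausible for star-respecting partitions: if $F$ is a union of the stars indexed by $D$ and $F'$ is the corresponding union of dual cycles, one can check $r(F)=r(F')+|D|$ and $r(E(\mathcal{I}(H^*)))=r(E(\mathcal{I}(H)^*))+|E(H)|$, so the extra ranks cancel in $\mu$ and the two connectivity values agree. But an optimal branch decomposition of $\mathcal{I}(H)^*$ (or of $\mathcal{I}(H^*)$) has no reason to respect stars, so to get equality of the \emph{minima} you would need a normalization lemma converting arbitrary decompositions of each of the two graphs into star-respecting ones without increasing $\mu$-width --- a substantial piece of work that your sketch does not address. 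The same issue affects your step 2: the identity $\mathbf{bw}(H)=\mu\text{-}\mathbf{bw}(\mathcal{I}(H))$ (with your incidence-graph definition of hypergraph matroid branchwidth) is again a two-sided parameter statement asserted without proof, and it is not the paper's \autoref{prop:bw-mbw}, which is intrinsic to the hypergraph. The fix is exactly the paper's move: abandon the parameter-level identities, fix one optimal decomposition $(T,\lambda)$ of $H$, and chase the single chain $\mathbf{bw}(G)=w_\delta(\mathsf{Inc}(T),\mathsf{Inc}(\lambda))\geq w_\delta(\mathsf{Inc}(T),\mathsf{Inc}(\lambda)^*)-g= w_\delta(\mathsf{Inc}(T),\mathsf{Inc}(\lambda^*))-g\geq\mathbf{bw}(G^*)-g$, in which every step is a one-directional comparison of explicit decompositions.
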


The proof of~\autoref{main_result} uses~\autoref{thm:main} as a base case. In fact, we first show that the branchwidth of a hypergraph and its \emph{incidence graph} are the same (\autoref{lemma1}) and the same holds after considering its dual (\autoref{lemma2}). Then, \autoref{main_result} follows by applying~\autoref{thm:main} for the incidence graph.

\section{\texorpdfstring{$(1,2g)$-self-duality of branchwidth on graphs}{(1,2g)-self-duality of branchwidth on graphs}}
\label{sec:selfdual-graphs}

In this section we aim to prove~\autoref{thm:main}.
We first give the formal definitions of a branch decomposition of a symmetric set function, of branchwidth, and of the matroid branchwidth of a graph (\autoref{subsec:bw} and \autoref{subsec:mbw}).
We then show how the matroid branchwidth of a graph changes when removing or contracting certain edges under different connectivity assumptions (\autoref{subsec:mbw-and-operations}).
Using these results, in~\autoref{subsec:dual-bridge-border}, we show that the contraction of a non-loop edge of a given graph (which corresponds to the removal of its dual edge in the dual graph) preserves the same ``gap'' between the matroid branchwidth of a graph and the one of its dual, under the assumption that the dual edge of the contracted edge is not a bridge of the dual graph.
We conclude this section with the proof of~\autoref{thm:main}, that is presented in~\autoref{subsec:proof-graphs}.
At this point, we would like to stress that, in this paper, we allow graphs
to contain multiple edges and/or loops.

\subsection{Branch decompositions and branchwidth}
\label{subsec:bw}

We first define branch decompositions of symmetric set functions. Here, we use terminology from~\cite{GeelenGW02bran} and~\cite{Vatshelle12neww}.
\paragraph{Branch decompositions.}
Given a set $A$, we use $2^A$ to denote the set of all its subsets.
Let $A$ be a finite set.
A \emph{branch decomposition} on $A$ is a pair $(T,\lambda)$, where
$T$ is a binary tree and $\lambda$ is a bijection from $A$ to the set of its leaves, denoted by $L(T)$.
For every edge $e\in E(T)$, let $T_1,T_2$ be the two connected components of $T\setminus e$.
Notice that if we set $A_i^e = \{\lambda(v)^{-1} \mid v\in L(T)\cap V(T_i)\}$ for $i\in\{1,2\}$,
then $(A_1^e, A_2^e)$ is a partition of $A$.
Let $f : 2^A \to \mathbb{R}$ be a symmetric set function (i.e., for every $B\in 2^{A}$, $f(B) = f(A\setminus B)$).
For every $e\in E(T)$, we define $f_\lambda(e):=f(A_1^e)$ (note that this is well-defined since $f$ is symmetric).
The \emph{width of $f$} on a branch decomposition $(T,\lambda)$ on $A$, denoted by $w_f (T,\lambda)$, is the maximum over all $e \in E(T)$ of $f_\lambda(e)$.
The \emph{branchwidth of $f$ on $A$} is the minimum width of $f$ over all branch decompositions $(T,\lambda)$ on $A$.

\paragraph{Branchwidth.}
Given a hypergraph $G$ and a set $E\subseteq E(G)$,
we define the \emph{border} of $E$ as the largest subset of $V(G)$ whose each element is the end of both an edge of $E$ and an edge of $E(G)\setminus E$ and is denoted by $\partial(E)$.
We use $\delta$ to denote the function that maps each $F\subseteq E(G)$ to $|\partial(F)|$.
The \emph{branchwidth} of $G$ is the branchwidth of $\delta$ on $E(G)$ and is denoted by $\mathbf{bw}(G)$.
We will use the term \emph{branch decomposition of $G$} to refer to a branch decomposition of $E(G)$.

\paragraph{Connected branch decompositions.}
A branch decomposition is \emph{connected} when for every $e\in E(T)$, both $A_1^e$ and $A_2^e$ induce a connected subgraph of $G$. We will use the following result~\cite[Theorem 1]{MazoitT07bran} (see also~\cite[Lemma 1]{BarriereFFFNST12conn} for a proof for graphs).
\begin{proposition}\label{prop:conn_bw}
Every connected bridgeless hypergraph $H$ has a connected branch decomposition of width $\mathbf{bw}(H)$.
\end{proposition}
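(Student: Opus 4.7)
The plan is to start with an optimal branch decomposition $(T,\lambda)$ of $H$ of width $w=\mathbf{bw}(H)$ and, through a finite sequence of local modifications of $T$, turn it into a connected one of the same width. To set up a clean descent, I would work with the disconnectedness measure $\mu(T,\lambda) := \sum_{e\in E(T)}\bigl((c(A_1^e)-1)+(c(A_2^e)-1)\bigr)$, where $c(X)$ counts connected components of $H[X]$, and fix an optimal branch decomposition minimising $\mu$. The claim is that $\mu=0$ for this decomposition, which is exactly the connectedness property; I would argue by contradiction.

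Assume $e^*\in E(T)$ is an edge whose partition $(A_1^{e^*},A_2^{e^*})$ has $H[A_1^{e^*}]$ disconnected into components with edge sets $C_1,\dots,C_m$, $m\geq 2$. The key structural observation is that the vertex sets $V(C_i)$ are pairwise disjoint, so the border splits as $\partial(A_1^{e^*})=\bigsqcup_{i=1}^{m}\bigl(V(C_i)\cap\partial(A_1^{e^*})\bigr)$, and since $H$ is connected and $m\geq 2$, every piece $V(C_i)\cap\partial(A_1^{e^*})$ is non-empty. Consequently, for each $i$ one has $|\partial(A_1^{e^*}\setminus C_i)|=|\partial(A_1^{e^*})|-|V(C_i)\cap\partial(A_1^{e^*})|<|\partial(A_1^{e^*})|\leq w$. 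This ``strict slack'' is what makes room for the surgery to come.

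The modification is a detach-and-reattach move: first gather the leaves of $T$ labelled by $C_1$ into a single subtree (by an intermediate rearrangement within the $A_1^{e^*}$-side of $T$ that preserves widths, if necessary), then detach this subtree and splice it back onto the $A_2^{e^*}$-side of $e^*$ at an edge $f$ chosen by descending from $e^*$ within the $A_2^{e^*}$-subtree $T^{(2)}$ along a path directed by $V(C_1)\cap\partial(A_1^{e^*})$. A submodularity argument for the border function $\delta$, applied to the nested partitions along this splicing path and exploiting both the strict slack above and the vertex-disjointness of $V(C_1)$ from the other $V(C_j)$'s, should establish that every edge of the resulting tree $T'$ still displays a border of size at most $w$, while $\mu$ strictly decreases at $e^*$; the conclusion would then contradict the minimality of $(T,\lambda)$.

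The main obstacle is precisely verifying width-preservation along the full splicing path in $T^{(2)}$. Near $e^*$ and $f$, width control follows immediately from submodularity and the slack observed above, but showing that no intermediate edge along the descent exceeds $w$ requires a globally consistent choice of $f$ together with a monotonicity argument along the descent. Bridgelessness of $H$ enters precisely here: it forbids the pathological configuration in which a vertex of $V(C_1)\cap\partial(A_1^{e^*})$ is a cut-vertex of $H$ whose presence would force the border of some intermediate edge along the splicing path to cross the threshold $w$. Once this width-control is established, the rest is a clean termination argument on the non-negative integer $\mu$.
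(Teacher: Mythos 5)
First, a point of reference: the paper does not prove this proposition at all --- it is imported verbatim as Theorem~1 of Mazoit and Thomass\'e \cite{MazoitT07bran} (with Lemma~1 of \cite{BarriereFFFNST12conn} cited for the graph case), so there is no in-paper proof to compare against. Your plan is in the same general spirit as the known exchange-argument proofs, and your preparatory observations are correct: the components $C_1,\dots,C_m$ of $H[A_1^{e^*}]$ have pairwise disjoint vertex sets, each meets $\partial(A_1^{e^*})$ because $H$ is connected, and hence $\partial(A_1^{e^*}\setminus C_i)=\partial(A_1^{e^*})\setminus V(C_i)$ gives the strict slack you describe. But the proposal has a genuine gap, and it sits exactly at the step you yourself flag as ``the main obstacle'': you never establish that the detach-and-reattach move can be carried out without the border exceeding $w$ at some edge of the modified tree. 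Saying that a submodularity argument ``should establish'' this is not a proof; constructing the reattachment point $f$ and the monotone descent in $T^{(2)}$ \emph{is} the theorem. Two further unproved sub-steps compound this. (i) The preliminary ``gathering'' of the leaves labelled by $C_1$ into a single subtree of the $A_1^{e^*}$-side ``by a width-preserving rearrangement'' is itself a nontrivial restructuring --- those leaves can be scattered arbitrarily, and a width-preserving regrouping is not known to exist a priori; this is essentially a sub-instance of the problem you are trying to solve. (ii) Your potential $\mu$ is a sum over \emph{all} edges of $T$, but you only argue that the summand at $e^*$ decreases. The surgery changes the displayed partitions at every edge between the old and new positions of the $C_1$-subtree, and the component counts there can increase; without controlling the net change, the descent argument does not terminate.

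Finally, the role of bridgelessness is asserted rather than demonstrated, and the assertion as stated (``it forbids a vertex of $V(C_1)\cap\partial(A_1^{e^*})$ from being a cut-vertex'') is not correct --- a bridgeless hypergraph can certainly have cut-vertices. Bridgelessness is genuinely indispensable (for the graph consisting of two triangles joined by a bridge $b$, the set $E(H)\setminus\{b\}$ is displayed at a leaf edge of \emph{every} branch decomposition and induces a disconnected subgraph, so no connected decomposition exists at all), which means any correct proof must invoke it at a precise, identifiable point; your sketch does not pin that point down. In the actual proofs this is done quite differently: Mazoit and Thomass\'e work with the matroid connectivity function $\mu_G$ and a partition-fusion lemma, using bridgelessness to force $\delta$ and $\mu_G$ to agree on the displayed sets of a suitably chosen decomposition, rather than via a cut-vertex obstruction. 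As it stands, your text is a plausible research plan whose decisive steps remain conjectural.
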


\paragraph{Duals of branch decompositions.}
Let $G$ be a graph embedded on a surface.
The \emph{dual of a branch decomposition} $(T,\lambda)$ of $E(G)$ is a branch decomposition $(T,\lambda^*)$ on $E(G^*)$ where for every $v\in L(T)$, $\lambda^*(v)$ is the dual edge of the edge $\lambda(v)$.

\subsection{Matroid branchwidth}
\label{subsec:mbw}
In this subsection, we define the \emph{matroid branchwidth} of a graph. Our definition is from~\cite{MazoitT07bran}.
See~\cite{KoutsonasTY14oute,Kashyap08matr} for the corresponding notion of \emph{matroid pathwidth}.

\paragraph{Cycle bases of graphs.}
A \emph{cycle basis} of a graph $G$ is a minimal set $C$ of cycles such that every Eulerian subgraph of $G$ (i.e., a subgraph containing a cycle which uses each edge of the subgraph exactly once) can be expressed as the symmetric difference of cycles in $C$. The number of cycles in every cycle basis of $G$ is constant and equal to $|E(G)|-|V(G)|+\mathsf{cc}(G)$, wherein $\mathsf{cc}(G)$ is the number of connected components of $G$. This number is called the \emph{circuit rank} of $G$. 

Given a hypergraph $G$ and a set $E\subseteq E(G)$,
\emph{a spanning forest of $E$} is a maximal acyclic subset of $E$.
We call \emph{size} of a spanning forest the number of edges it contains.
Given a graph $G$, we define the function $r:2^E \to \mathbb{N}$ such that for every $E\subseteq E(G)$, $r(E)$ is the size of a spanning forest of $E$.

\paragraph{Matroid branchwidth.}
Let $G$ be a hypergraph. We define the function $\mu_G: 2^{E(G)} \to \mathbb{N}$, 
where for every $F\subseteq E(G)$, $\mu_G(F) := r(F) + r(E(G)\setminus F) - r(E(G)) +1$.
The \emph{matroid branchwidth} of $G$, denoted by $\mu\text{-}\mathbf{bw}(G)$, is the branchwidth of $\mu_G$ on $E(G)$.
As observed in~\cite{MazoitT07bran}, while branchwidth and matroid branchwidth are two different parameters and, in general, $\mu\text{-}\mathbf{bw}(G)\leq \mathbf{bw}(G)$,~\autoref{prop:conn_bw} implies that in the case of connected bridgeless hypergraphs, branchwidth and matroid branchwidth  coincide.

\begin{proposition}\label{prop:bw-mbw}
For every connected bridgeless hypergraph $H$, $\mathbf{bw}(H) = \mu\text{-}\mathbf{bw}(H)$.
\end{proposition}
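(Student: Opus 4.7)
The plan is to compare the two width functions $\delta$ and $\mu_H$ pointwise on every non-trivial bipartition of $E(H)$, and then combine this comparison with Proposition~\ref{prop:conn_bw}. Writing $V(X)$ for the set of vertices spanned by an edge set $X$ and $\mathsf{cc}(Y,X)$ for the number of connected components of the subhypergraph $(Y,X)$, I would substitute the circuit-rank identity $r(X)=|V(X)|-\mathsf{cc}(V(X),X)$ into the definition of $\mu_H$ and use $V(F)\cup V(E(H)\setminus F)=V(H)$ together with $V(F)\cap V(E(H)\setminus F)=\partial(F)$ to derive
\[
\mu_H(F)=\delta(F)+1+\mathsf{cc}(H)-\mathsf{cc}(V(F),F)-\mathsf{cc}(V(E(H)\setminus F),E(H)\setminus F).
\]

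Since $\mathsf{cc}(H)=1$ and the unique connected component of $H$ contributes edges to both sides of every non-trivial bipartition, one has $\mathsf{cc}(V(F),F)+\mathsf{cc}(V(E(H)\setminus F),E(H)\setminus F)\ge 2$, so the correction term is non-positive. Hence $\mu_H(F)\le\delta(F)$ always, with equality exactly when both $(V(F),F)$ and $(V(E(H)\setminus F),E(H)\setminus F)$ are connected. Taking minima over all branch decompositions then immediately yields $\mu\text{-}\mathbf{bw}(H)\le\mathbf{bw}(H)$.

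For the reverse inequality, I would start from an optimal $\mu_H$-branch decomposition $(T,\lambda)$ and transform it into a connected one $(T',\lambda')$ of no larger $\mu_H$-width, using the connectification procedure from~\cite[Theorem~1]{MazoitT07bran} that underlies Proposition~\ref{prop:conn_bw}. On $(T',\lambda')$ the pointwise identity $\mu_H=\delta$ holds at every edge, so $w_\delta(T',\lambda')=w_{\mu_H}(T',\lambda')\le\mu\text{-}\mathbf{bw}(H)$, giving $\mathbf{bw}(H)\le\mu\text{-}\mathbf{bw}(H)$. The main obstacle is precisely this connectification step: whenever a side $A_i^e$ of a separator fails to induce a connected subhypergraph, one must prune and regraft the two subtrees of $T$ on either side of $e$ so that each component of $A_i^e$ is isolated within its own subtree, and then rely on the submodularity of $\mu_H$ together with the connectedness of the graphic matroid of $H$ (valid because $H$ is connected and bridgeless) to certify that no such reshaping raises the $\mu_H$-width. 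Once this strengthened form of Proposition~\ref{prop:conn_bw} is granted, the pointwise identity above closes the argument.
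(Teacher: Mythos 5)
Your proposal follows essentially the same route as the paper, which offers no independent argument but simply combines the general inequality $\mu\text{-}\mathbf{bw}(H)\leq \mathbf{bw}(H)$ with the Mazoit--Thomass\'e connectification theorem cited as \autoref{prop:conn_bw}; your pointwise identity $\mu_H(F)=\delta(F)+1+\mathsf{cc}(H)-\mathsf{cc}(V(F),F)-\mathsf{cc}(V(E(H)\setminus F),E(H)\setminus F)$ is precisely the computation that makes both halves of that one-line argument explicit. You are also right to flag that the hard direction $\mathbf{bw}(H)\leq\mu\text{-}\mathbf{bw}(H)$ requires a connected branch decomposition that is optimal for $\mu_H$ rather than for $\delta$ (this is what \cite{MazoitT07bran} actually supplies, and the paper's wording of \autoref{prop:conn_bw} glosses over the distinction); like the paper, you defer that step to the cited reference rather than reproving it, which is consistent with how the proposition is used here.
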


Note that, in matroid language, $r(E(G))$ is the rank of the graphic matroid of $G$ (i.e., the matroid whose ground set is $E(G)$ and independent sets are the edge sets of the forests of $G$).
Therefore, the matroid branchwidth of $G$ coincides with the branchwidth of the cycle matroid of $G$
(i.e., the matroid that has $E(G)$ as ground set, while its independent sets are the sub-forests of $G$).
This provides an immediate proof for the planar case of~\autoref{thm:main}, since the branchwidth of a planar matroid is equal to that of its dual. However, this cannot be extended to a general proof of our hypothesis in higher Euler genus graphs, as the only matroids that are at the same time graphic and co-graphic are planar matroids.

\subsection{Matroid branchwidth and graph operations}
\label{subsec:mbw-and-operations}

In this subsection we show how the matroid branchwidth of a graph changes when removing or contracting certain edges under different connectivity assumptions.
First, we give some definition and notation for edge removals and contractions on graphs.

\paragraph{Operations on graphs.}
Given a graph $G=(V,E)$ and a set $F\subseteq E$,
we use $G\setminus F$ to denote the graph $(V,E\setminus F)$ and, given an $e\in E$, we use $G\setminus e$ to denote the graph $(V,E\setminus \{e\})$.
We also denote by $G[F]$ the graph $(V\cap F^2, F)$.
The \emph{edge contraction} of an edge $e=\{u,v\}$ of a graph $G$ is the operation that deletes $e$, adds a new vertex $x_{uv}$ to $V$ and connects $x_{uv}$ to all the neighbours of $u$ and $v$ (notice that, as we consider multigraphs, this operation may create multiple edges, and that a contraction of an edge makes a triangle into a \emph{dipole}, and a dipole into a loop). We denote the graph obtained by $G/e$.
Given a set $F\subseteq E(G)$ and an edge $e=\{u,v\}\in F$,
we denote by $F^{/e}$ the set obtained from $F$ after the contraction of $e$, i.e., the set
$(F \setminus (\bigcup_{z\in N_G (u)\cup N_G (v)} \{\{u,z\},\{v,z\}\}))
\cup \bigcup_{z\in N_G (u)\cup N_G (v)}\{x_{uv},z\}$.

\paragraph{Contracting an edge of a connected graph.}
We observe that, given a connected graph $G$, the graph $G/e$ is still connected and has one fewer vertex than $G$.
Also, for every connected graph $H$ it holds that $r(E(H))=|V(H)|-1$.
This implies the following.
\begin{observation}\label{obs:conn}
If $G$ is a connected graph, then $r(E(G/e)) = r(E(G)) - 1$.
\end{observation}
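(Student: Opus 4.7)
The plan is to derive the identity directly from the formula $r(E(H))=|V(H)|-1$ valid for any connected graph $H$, which is noted in the paragraph immediately preceding the observation. So the entire proof reduces to verifying the two hypotheses needed to apply this formula on both sides of the equation, namely that $G$ and $G/e$ are both connected, and then comparing their vertex counts.

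First I would check connectivity of $G/e$. Since $G$ is connected, any two vertices $x,y\in V(G)$ are joined by a path $P$ in $G$. Under the contraction of $e=\{u,v\}$, the path $P$ projects to a walk in $G/e$: every occurrence of $u$ or $v$ along $P$ becomes the new vertex $x_{uv}$, and every edge of $P$ distinct from $e$ survives as an edge of $G/e$ incident to the appropriate endpoints (by the explicit description of $F^{/e}$ given just above the observation). Hence $G/e$ is connected. Second, by definition of contraction, $|V(G/e)|=|V(G)|-1$ since the two endpoints of $e$ are replaced by the single new vertex $x_{uv}$.

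Applying the formula $r(E(H))=|V(H)|-1$ to both $H=G$ and $H=G/e$ then gives
\[
r(E(G/e)) \;=\; |V(G/e)| - 1 \;=\; (|V(G)|-1) - 1 \;=\; r(E(G)) - 1,
\]
as required.

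There is essentially no obstacle here; this is a one-line bookkeeping observation whose only content is that edge contraction preserves connectivity and decreases the vertex count by exactly one, which are both immediate from the definitions. The observation is stated as a separate item presumably because it will be invoked repeatedly in the subsequent subsections on matroid branchwidth under contraction.
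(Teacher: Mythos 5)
Your proof is correct and follows exactly the paper's reasoning: the paragraph preceding the observation derives it from the facts that $G/e$ is connected with one fewer vertex than $G$ and that $r(E(H))=|V(H)|-1$ for every connected graph $H$. You simply spell out the connectivity and vertex-count steps in slightly more detail.
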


We also prove that given a graph $G$ and a set $F\subseteq E(G)$ such that
both $G$ and $G[F]$ are connected, the contraction of an edge in $F$ can reduce $\mu_{G}(F)$ by one.

\begin{lemma}\label{lem:remove-loops}
Let $G$ be a connected graph embedded on a surface and let $F\subseteq E(G)$ such that $G[F]$ is connected.
For every $e \in F$, it holds that $\mu_{G}(F) -1 \leq \mu_{G/e}(F^{/e}) \leq \mu_{G}(F)$.
\end{lemma}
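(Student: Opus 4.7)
The plan is to expand $\mu_G(F) - \mu_{G/e}(F^{/e})$ using the definition $\mu_H(X) = r(X) + r(E(H)\setminus X) - r(E(H)) + 1$ and to show that this difference always lies in $\{0,1\}$, which is exactly the claimed inequality. So I would split the difference into three rank comparisons: $r(E(G))$ versus $r(E(G/e))$, $r(F)$ versus $r(F^{/e})$, and $r(E(G)\setminus F)$ versus $r(E(G/e)\setminus F^{/e})$.

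Two of these three comparisons will be forced to be exactly $1$. Since $G$ is connected, \autoref{obs:conn} yields $r(E(G/e)) = r(E(G))-1$. Similarly, the hypothesis that $G[F]$ is connected together with $e\in F$ means that $(G/e)[F^{/e}]$ is obtained from $G[F]$ by contracting one of its edges; it is therefore still connected and has one fewer vertex, so by the same reasoning $r(F^{/e}) = r(F) - 1$. When these two equalities are substituted into the difference, their contributions cancel, and we are reduced to
\[
\mu_G(F) - \mu_{G/e}(F^{/e}) \;=\; r(E(G)\setminus F) - r(E(G/e)\setminus F^{/e}).
\]

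It then remains to argue that this last quantity lies in $\{0,1\}$. Because $e\in F$, the set $E(G)\setminus F$ is untouched by the contraction as a collection of edges; the only effect is to identify the two endpoints $u,v$ of $e$ into the single vertex $x_{uv}$ of $G/e$. I would distinguish two cases according to the spanning subgraph $(V(G), E(G)\setminus F)$. If $u$ and $v$ lie in the same connected component of this subgraph, then identifying them does not change $\mathsf{cc}$, but removes one vertex, so the rank drops by exactly $1$; if they lie in different components, the identification decreases $\mathsf{cc}$ by one as well, and the rank is unchanged. Either way, $r(E(G/e)\setminus F^{/e}) \in \{r(E(G)\setminus F) - 1,\, r(E(G)\setminus F)\}$, which together with the previous paragraph gives the desired inequality.

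The main point requiring care is this final rank-after-contraction comparison on $E(G)\setminus F$: this is where the $-1$ in the statement genuinely comes from, and it is the only step whose outcome is not predetermined by the connectivity hypotheses. Everything else is a bookkeeping exercise once \autoref{obs:conn} and the observation that contracting an edge of a connected subgraph preserves connectivity and drops the rank by one are applied twice.
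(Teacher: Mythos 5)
Your proof is correct and follows essentially the same route as the paper's: both expand $\mu_G(F)-\mu_{G/e}(F^{/e})$ into the three rank comparisons, use \autoref{obs:conn} (and its analogue for the connected graph $G[F]$) to get the two exact drops of $1$, and then show that $r(E(G)\setminus F)-r(E(G/e)\setminus F^{/e})\in\{0,1\}$. Your dichotomy on whether the endpoints of $e$ lie in the same component of $(V(G),E(G)\setminus F)$ is just a slightly sharper phrasing of the paper's observation that the drop can occur only when $e\subseteq\partial(F)$; the substance is identical.
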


\begin{proof}
We show how $\mu_G (F)$ changes after the contraction of $e$.
Observe that since $G$ and $G[F]$ are connected, then, due to~\autoref{obs:conn}, $r(E(G/e)) = r(E(G)) - 1$ and $ r(F^{/e})= r(F)-1$.
To see how $r(E(G)\setminus F)$ changes, observe the following:
if $e\nsubseteq \partial(F)$, then $E(G)\setminus F = E(G/e)\setminus F^{/e}$ and therefore $r(E(G/ e)\setminus F^{/e}) = r(E(G)\setminus F)$.
If $e\subseteq \partial(F)$, note that the endpoints of $e$ may lie in the same component of $G\setminus F$
and therefore the size of a spanning forest of the graph obtained by $G\setminus F$ can be reduced by one after contracting $e$, implying $r(E(G)\setminus F)  -1 \leq r(E(G/ e)\setminus F^{/e})\leq r(E(G)\setminus F)$.
\end{proof}

\paragraph{Contracting non-loop edges of a connected bridgeless graph.}
In the next lemma we show that, given a graph $G$ that is connected and bridgeless (as the graphs in~\autoref{thm:main}) and a set $F\subseteq E(G)$, where $F$ and $E(G)\setminus F$ induce connected graphs, the contraction of a non-loop edge $e\in F$ reduces $\mu_{G}(F)$ by one if and only if both endpoints of $e$ belong to the border $\partial(F)$ of $F$. Otherwise, $\mu_{G}(F)$ remains the same.

\begin{lemma}\label{lem:contract}
Let $G$ be a connected graph and let $F\subseteq E(G)$, where both $G[F]$ and $G\setminus F$ are connected.
For every non-loop $e\in F$, it holds that
$$\mu_{G/e}(F^{/e})=\begin{cases}
 \mu_{G}(F) -1, & \text{if both endpoints of $e$ belong to $\partial(F)$ and}\\
\mu_{G}(F) , & \text{otherwise.}
\end{cases}
$$
\end{lemma}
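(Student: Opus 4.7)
The plan is to track how each summand in the expression $\mu_G(F) = r(F) + r(E(G)\setminus F) - r(E(G)) + 1$ evolves when $e$ is contracted. Two of the three summands behave uniformly: since $G$ is connected and $e\in E(G)$, \autoref{obs:conn} gives $r(E(G/e)) = r(E(G)) - 1$; since $G[F]$ is connected and $e\in F$, the same observation applied inside $G[F]$ gives $r(F^{/e}) = r(F) - 1$. Subtracting,
\[
\mu_{G/e}(F^{/e}) - \mu_G(F) \;=\; r(E(G/e)\setminus F^{/e}) - r(E(G)\setminus F) + 1,
\]
so the entire question reduces to comparing $r(E(G)\setminus F)$ with $r(E(G/e)\setminus F^{/e})$; \autoref{lem:remove-loops} already guarantees this difference lies in $\{-1, 0\}$, and the task is to decide which value occurs in each case.

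To make this decision, I would work in the spanning subgraph $H=(V(G), E(G)\setminus F)$ together with its image $H'$ obtained by identifying the endpoints $u,v$ of $e$ into a single vertex $x_{uv}$. Since $e\in F$, the edge set $E(G)\setminus F$ itself is untouched by the contraction, so the only effect is on the vertex set and connectivity: $H'$ has exactly one fewer vertex than $H$, while its number of connected components equals that of $H$ if $u,v$ lie in the same component of $H$ and is one less than that of $H$ otherwise. Using the identity $r(E')=|V(G)| - (\text{number of components of } (V(G),E'))$, this yields
\[
r(E(G/e)\setminus F^{/e}) - r(E(G)\setminus F) \;=\; \begin{cases} -1 & \text{if $u,v$ lie in the same component of $H$,}\\ 0 & \text{otherwise.}\end{cases}
\]

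It remains to match this component-membership dichotomy with the stated $\partial(F)$ criterion. If both endpoints of $e$ are in $\partial(F)$, each is incident to some edge of $E(G)\setminus F$, hence neither is isolated in $H$; combined with the hypothesis that $G\setminus F$ is connected, both must lie in the unique component of $H$, so we are in the first subcase and $\mu_{G/e}(F^{/e}) = \mu_G(F)-1$. Conversely, if some endpoint, say $u$, is not in $\partial(F)$, then $u$ is incident to no edge of $E(G)\setminus F$ and $\{u\}$ is a singleton component of $H$; since $e$ is a non-loop we have $u\neq v$, so the component of $v$ is distinct from $\{u\}$, placing us in the second subcase and yielding $\mu_{G/e}(F^{/e}) = \mu_G(F)$.

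The main subtlety is the component analysis: we must use $u\neq v$ (guaranteed because $e$ is a non-loop) to ensure that identifying the two endpoints genuinely changes the component count as claimed, and we must use the connectedness of $G\setminus F$ to rule out the configuration in which both endpoints are non-isolated in $H$ but nevertheless lie in distinct components, which would otherwise admit a third behaviour outside the dichotomy.
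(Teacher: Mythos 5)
Your proof is correct and follows essentially the same route as the paper's: both decompose $\mu_G(F)$ into its three rank summands, apply \autoref{obs:conn} to $G$ and to $G[F]$ to get the two uniform drops of one, and then settle the remaining term $r(E(G)\setminus F)$ by checking whether the two (distinct, since $e$ is a non-loop) endpoints of $e$ lie in a common component of the graph on $E(G)\setminus F$, using connectivity of $G\setminus F$ in the first case and the isolation of an endpoint outside $\partial(F)$ in the second. Your explicit identity $r(E')=|V(G)|-c$ merely formalizes the step that the paper argues more informally.
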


\begin{proof}
Let $e$ be a non-loop $e\in F$.
We show how $\mu_G (F)$ changes after the contraction of $e$.
First, observe that since $G$ and $G[F]$ are connected,~\autoref{obs:conn} implies that $r(E(G/e)) = r(E(G)) - 1$ and $ r(F^{/e})= r(F)-1$. Finally, if both endpoints of $e$ are contained in $\partial(F)$, we have that $r(E(G/ e)\setminus F^{/e})  = r(E(G)\setminus F) -1$. To see this, observe that, since both endpoints of $e$ are contained in $\partial(F)$ (and therefore both endpoints are incident to edges in $E(G)\setminus F$) and $G\setminus F$ is connected, the graph obtained from $G\setminus F$ after the contraction of $e$ is connected and has one fewer vertex than $G\setminus F$.
If at least one of the endpoints of $e$ is not contained in $\partial(F)$, then the fact that $e\in F$ implies that the contraction of $e$ does not change the graph $G\setminus F$, and thus $r(E(G/ e)\setminus F^{/e}) = r(E(G)\setminus F)$.
\end{proof}

\paragraph{Removing non-bridge edges.}
We conclude this subsection by showing that, given a graph $G$ and a set $F\subseteq E(G)$, the removal of an edge $e\in F$ that is not a bridge of $G$ reduces $\mu_{G}(F)$ by one if and only if $e$ is a bridge of $G[F]$. Otherwise, $\mu_{G}(F)$ remains the same.

\begin{lemma}\label{lem:remove}
Let $G$ be a graph and let $F\subseteq E(G)$.
For every $e\in F$ that is not a bridge of $G$, it holds that
$$\mu_{G\setminus e}(F \setminus e)
=\begin{cases}
\mu_{G}(F)-1, & \text{if $e$ is a bridge of $G[F]$ and}\\
\mu_{G}(F), & \text{otherwise.}
\end{cases}
$$
\end{lemma}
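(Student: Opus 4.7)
The proof should be a direct computation from the definition
$\mu_G(F) = r(F) + r(E(G)\setminus F) - r(E(G)) + 1$, tracking how each of the three rank terms behaves when the single edge $e$ is deleted from $G$ and from $F$. The plan is to treat the three terms separately and then combine.

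First I would handle the ``complement'' term. Since $e\in F$, we have
$E(G\setminus e)\setminus(F\setminus e) = E(G)\setminus F$, as sets; consequently this edge set, viewed as a subgraph of the common vertex set, has the same spanning forests before and after deleting~$e$, so $r(E(G\setminus e)\setminus(F\setminus e)) = r(E(G)\setminus F)$. Next I would handle the ``ground set'' term $r(E(G))$. Using the standard identity $r(E(H)) = |V(H)| - \mathsf{cc}(H)$ for a graph $H$, and observing that $V(G\setminus e) = V(G)$ and $\mathsf{cc}(G\setminus e) = \mathsf{cc}(G)$ because $e$ is by hypothesis not a bridge of~$G$, one gets $r(E(G\setminus e)) = r(E(G))$.

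The only term that can change is therefore $r(F)$. Here I would invoke the standard graphic-matroid fact that, for any edge $e$ in a subgraph $H$, $r(E(H)\setminus\{e\}) = r(E(H)) - 1$ if $e$ is a bridge of $H$ and $r(E(H)\setminus\{e\}) = r(E(H))$ otherwise (equivalently, $e$ lies in no cycle of $H$ iff it appears in every spanning forest of $H$). Applied to $H = G[F]$, this gives $r(F\setminus e) = r(F) - 1$ when $e$ is a bridge of $G[F]$, and $r(F\setminus e) = r(F)$ otherwise.

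Combining the three observations in the definition of $\mu_{G\setminus e}(F\setminus e)$ yields
\[
\mu_{G\setminus e}(F\setminus e) - \mu_G(F) \;=\; r(F\setminus e) - r(F),
\]
which is exactly $-1$ in the bridge case and $0$ in the non-bridge case, as claimed. I do not foresee a real obstacle: the hypothesis that $e$ is not a bridge of $G$ is used precisely to kill the change in $r(E(G))$, and the only substantive input is the elementary matroid fact about bridges in $G[F]$, which is at the level of ``$r$ drops by $1$ iff the removed element is a coloop.'' The whole argument is essentially three one-line rank identities plugged into the definition of $\mu_G$.
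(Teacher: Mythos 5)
Your proposal is correct and matches the paper's proof essentially verbatim: both decompose $\mu$ into the three rank terms, note that the complement term is unchanged because $e\in F$, that $r(E(G))$ is unchanged because $e$ is not a bridge of $G$, and that $r(F)$ drops by one exactly when $e$ is a bridge of $G[F]$. The only cosmetic difference is that you justify the invariance of $r(E(G))$ via the formula $r = |V| - \mathsf{cc}$, whereas the paper applies the same spanning-forest/bridge observation uniformly to both $E(G)$ and $F$.
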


\begin{proof}
Let $e$ be an edge in $F$ that is not a bridge of $G$.
We show how $\mu_G (F)$ changes after the removal of $e$.
In fact, we show that
\begin{enumerate}
\item $r(E(G\setminus e)) = r(E(G))$,
\item $ r(E(G\setminus e)\setminus (F\setminus e)) = r(E(G)\setminus F)$, and
\item $r(F\setminus e) = r(F) -1$ if $e$ is a bridge of $G[F]$, while $r(F\setminus e) = r(F)$, otherwise.
\end{enumerate}
Item (2) is directly implied by the fact that $e\in F$. Indeed, $r(E(G\setminus e)\setminus (F\setminus e)) =  r((E(G)\setminus \{e\})\setminus (F\setminus \{e\}) = r(E(G)\setminus F)$.
For items (1) and (3), recall that, by definition, for every set $B\subseteq E(G)$,
$r(B)$ is equal to the number of edges of a spanning forest of $G[B]$.
Let $B\subseteq E(G)$.
Observe that if $e$ is a bridge of $G[B]$ then every spanning forest $T$ of $G[B]$ contains $e$ and therefore every spanning forest $T'$ of $(G\setminus e) [B\setminus e]$ has one edge less than $T$, implying that $r(B\setminus e)  = r(B) -1$.
If $e$ is not a bridge of $G[B]$, then every spanning forest of  $(G\setminus e) [B\setminus e]$ is also a spanning forest of $G[B]$, implying that $r(B\setminus e) = r(B) $.
Item (1) follows by setting $B=E(G)$ and using the assumption that $e$ is not a bridge of $G$,
while item (3) follows by setting $B=F$.
\end{proof}

\subsection{On the duality of bridges and borders}
\label{subsec:dual-bridge-border}

We now aim to study when for a given set $F\subseteq E(G)$,
whether $e^*$ being a bridge of $G^*[F^*]$ implies that both endpoints of $e$ belong to $\partial(F)$, and vice-versa.
We will use the following result from~\cite{MoharT01grap}.

\begin{proposition}[\cite{MoharT01grap}] 
\label{prop:cycles}
Let $\Sigma$ be a surface of Euler genus $g$ and let $\{C_1,...,C_k\}$ be a set of distinct closed arcs of $\Sigma$. If $\Sigma\setminus\bigcup_{i\in [k]} C_i$ is connected, then $k\leq g$.
\end{proposition}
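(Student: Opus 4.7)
The plan is to derive the bound via a cut-and-cap argument based on Euler characteristic. I would begin by assuming that the $C_i$ are pairwise disjoint simple closed curves on $\Sigma$; the general case reduces to this one by successively smoothing each transverse crossing in a way that preserves the connectedness of the complement and does not decrease $k$, or equivalently by invoking $\mathbb{Z}/2$-homology: $\dim H_1(\Sigma;\mathbb{Z}/2)=g$, and any linear dependence among the classes $[C_i]$ would force a subset of the curves to cobound a $2$-chain, which would in turn force the complement to become disconnected.

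Next, I would cut $\Sigma$ open along $\bigcup_{i\in[k]} C_i$ to obtain a compact surface with boundary $\widetilde\Sigma$. Because $\Sigma\setminus\bigcup_{i\in[k]} C_i$ is connected, so is $\widetilde\Sigma$. Cutting along a simple closed curve preserves the Euler characteristic: a tubular neighbourhood of each $C_i$ is an annulus or a M\"obius band, both of Euler characteristic $0$, and is replaced by a surface of the same Euler characteristic. Hence $\chi(\widetilde\Sigma)=\chi(\Sigma)=2-g$. Moreover, each $C_i$ contributes one boundary circle if it is one-sided and two if it is two-sided, so the number $b$ of boundary components of $\widetilde\Sigma$ satisfies $b\geq k$.

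Finally, I would cap each boundary circle of $\widetilde\Sigma$ off with a disk to obtain a closed connected surface $\widehat\Sigma$ with $\chi(\widehat\Sigma)=\chi(\widetilde\Sigma)+b=2-g+b$. Since a closed connected surface has Euler characteristic at most $2$, we obtain $b\leq g$, and therefore $k\leq b\leq g$, as required.

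The main obstacle I anticipate is the reduction to the pairwise-disjoint case. Smoothing a crossing $C_i\cap C_j$ while simultaneously preserving the connectedness of the complement and not decreasing $k$ requires a careful choice between the two possible resolutions at each crossing, and keeping track of how smoothings may merge or split curves. I expect the cleanest formulation actually to be the homological route, which sidesteps any explicit surgery by reducing the whole statement to the linear independence of $[C_1],\dots,[C_k]$ in $H_1(\Sigma;\mathbb{Z}/2)$; the bound $k\leq g$ is then immediate from $\dim H_1(\Sigma;\mathbb{Z}/2)=g$.
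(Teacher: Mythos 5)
The paper offers no proof of this proposition --- it is imported from Mohar and Thomassen --- so your argument can only be judged on its own merits. Your cut-and-cap computation for \emph{pairwise disjoint} simple closed curves is correct and complete: cutting along the curves preserves the Euler characteristic, each curve contributes at least one boundary circle so $b\geq k$, the cut surface is connected because the complement is, and capping yields a closed connected surface with $\chi=2-g+b\leq 2$, whence $k\leq b\leq g$. That is a clean and correct proof of the disjoint case.

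The gap is exactly where you place it, and it cannot be repaired in the way you propose, because for merely \emph{distinct} (possibly intersecting) curves the conclusion fails. On the torus $\mathbb{R}^2/\mathbb{Z}^2$ (Euler genus $g=2$), take $u=(0,0)$ and $v=(\tfrac12,0)$ joined by the two halves of the circle $y=0$ and by the arc $\{(t,2t): 0<t<\tfrac12\}$; this is a theta graph embedded with a single face. The three pairs of arcs form three distinct simple closed curves $C_1,C_2,C_3$ of homology classes $(1,0)$, $(0,1)$, $(1,1)$, their union has connected complement (one open disk), and $k=3>2=g$. This refutes your homological claim: here $[C_1]+[C_2]+[C_3]=0$ in $H_1(\Sigma;\mathbb{Z}/2)$ and yet the complement stays connected, and correspondingly no smoothing or surgery scheme can simultaneously preserve connectedness of the complement and keep $k$ curves. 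What \emph{is} true --- and is what the application to a cycle basis in \autoref{lem:sep-bridge-genus} actually requires --- is that connectedness of $\Sigma\setminus K$, for $K=\bigcup_{i\in[k]}C_i$, forces the map $H_1(K;\mathbb{Z}/2)\to H_1(\Sigma;\mathbb{Z}/2)$ to be injective (via the exact sequence of the pair $(\Sigma,K)$ and Lefschetz duality), so that the \emph{circuit rank} of $K$ is at most $g$; the number of distinct curves contained in $K$ can exceed that circuit rank, as the example shows. So you should either restrict the hypothesis to disjoint curves, in which case your proof as written suffices, or restate the bound in terms of the circuit rank of $\bigcup_i C_i$ and prove the injectivity statement; the count of distinct curves is the wrong invariant to bound.
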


We show that if an $e\in F$ is non-loop and  $e^*$ is a bridge of $G^*[F^*]$, both endpoints of $e$ belong to $\partial(F)$.

\begin{lemma}\label{lem:bridge-sep-genus}
Let $G$ be a graph embedded on a surface.
Let $F\subseteq E(G)$ and let $e$ be a non-loop edge in $F$.
If $e^*$ is a bridge of $G^*[F^*]$, then both endpoints of $e$ belong to $\partial(F)$.
\end{lemma}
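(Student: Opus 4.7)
The plan is to prove the contrapositive: if some endpoint $v$ of $e$ fails to lie in $\partial(F)$, then $e^*$ is not a bridge of $G^*[F^*]$. Since $e$ itself lies in $F$ and is incident to $v$, the failure $v\notin\partial(F)$ forces every edge of $G$ incident to $v$ to lie in $F$; equivalently, every dual edge around the face of $G^*$ containing $v$ lies in $F^*$.

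The next step is a local argument at $v$. I would take a small open disc $D\subset\Sigma$ centred at $v$, containing no other vertex of $G$ and meeting each edge of $G$ only in a short arc (two arcs, in the case of a loop at $v$) near $v$. The boundary curve $\gamma=\partial D$ then crosses each non-loop edge at $v$ exactly once, each loop at $v$ exactly twice, and no other edge of $G$. Recording, as we traverse $\gamma$, the cyclic sequence of faces of $G$ visited and the edges crossed produces a closed walk $W$ in $G^*$: each face becomes a vertex of $G^*$ and each crossing of an edge $e'$ becomes a traversal of the dual edge $(e')^*$. Every edge crossed by $\gamma$ is incident to $v$ and therefore lies in $F$, so $W$ uses only edges of $F^*$, i.e.\ it is a closed walk in $G^*[F^*]$.

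The punchline is that because $e$ is a non-loop, $\gamma$ crosses $e$ exactly once, so $W$ uses $e^*$ exactly once. But a closed walk that uses a given edge exactly once must contain a cycle through that edge (remove the single occurrence of the edge; what remains is a walk between its endpoints, which contains a path; reinserting the edge yields a cycle). Hence $e^*$ lies on a cycle of $G^*[F^*]$ and cannot be a bridge of that graph, contradicting the hypothesis. I do not expect serious obstacles here: the argument is purely local at $v$, so \autoref{prop:cycles} (which controls global independent closed arcs and presumably plays its role in the converse direction) is not needed, and the only subtlety is the bookkeeping around possible loops and parallel edges incident to $v$. This is exactly why I phrase the argument in terms of crossings of $\gamma$ with edges rather than degrees at $v$, and why the non-loop hypothesis on $e$ is used precisely to ensure that $W$ traverses $e^*$ exactly once.
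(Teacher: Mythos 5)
Your proposal is correct and follows essentially the same route as the paper: both arguments observe that since $v\notin\partial(F)$ all edges incident to $v$ lie in $F$, and that their dual edges form a closed walk in $G^*[F^*]$ around $v$ traversing $e^*$ exactly once (because $e$ is not a loop), which forces $e^*$ onto a cycle and hence prevents it from being a bridge. The paper phrases this as a contradiction and compresses the closed-walk observation into the claim that $G^*[E^*\setminus\{e^*\}]$ is connected, whereas you spell it out via the boundary of a small disc around $v$; the content is the same.
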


\begin{proof}
Suppose towards a contradiction that $e^*$ is a bridge of $G^*[F^*]$ and that there is an endpoint $v$ of $e$ such that $v\notin \partial(F)$.
Let $E$ be the set of edges of $G$ that have $v$ as an endpoint.
The fact that $v\notin \partial(F)$ implies that $E\subseteq F$.
Now notice that since $e$ is not a loop, $|E|\geq 2$.
Moreover, since $e^*$ is a bridge of $G^*[F^*]$, $e^*$ has two distinct endpoints (i.e., is not a loop), which we denote by $x$ and $y$.
Then, consider the graph $G^*[E^*\setminus \{e^*\}]$ and observe that it is connected.
Therefore, the graph $G^*[E^*]$, which is a subgraph of $G^*[F^*]$, has a cycle that contains $e^*$ in its edge set, a contradiction.
\end{proof}

Suppose that $e^*$ is not a bridge of $G^*[F^*]$,
and thus is contained in a cycle of $G^*[F^*]$. That does not necessarily mean that $e$ is in a cut set of $G$, hence it might well be that $e\subseteq \partial(F)$, without that meaning that $E(G)\setminus F$ has somehow become disconnected. However, we can prove the following.

\begin{lemma}\label{lem:sep-bridge-genus}
Let $G$ be a graph embedded on an surface of Euler genus $g$.
Let $F\subseteq E(G)$ such that $G\setminus F$ is connected.
If $G^*[F^*]$ has a cycle basis $\mathcal{C}$ where $|\mathcal{C}|>g$,
there is an
$e \in F$ such that $e^*$ belongs to a cycle of $\mathcal{C}$ and $e$ is not a subset of $\partial(F)$.
\end{lemma}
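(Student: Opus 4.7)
The plan is to prove the contrapositive: assume every $e \in F$ whose dual $e^*$ belongs to some cycle of $\mathcal{C}$ satisfies $e \subseteq \partial(F)$, and deduce $|\mathcal{C}| \leq g$, contradicting the hypothesis. Setting $X := \bigcup_{C \in \mathcal{C}} C$, viewed as the union of $|\mathcal{C}|$ distinct simple closed curves in $\Sigma$, by~\autoref{prop:cycles} it suffices to show that $\Sigma \setminus X$ is connected.

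I would first observe that the embedding of $G \setminus F$ lies in $\Sigma \setminus X$: an edge $e' \in E(G) \setminus F$ crosses only its own dual $e'^* \notin F^*$ and hence misses every $C \in \mathcal{C}$, and vertices of $G$ sit in face interiors of $G^*$, off $X$. Since $G \setminus F$ is connected, its image is path-connected and therefore lies in a single connected component $U$ of $\Sigma \setminus X$, so $V(G \setminus F) \subseteq U$. The contrapositive assumption then forces every $e \in F$ with $e^* \subseteq X$ to have both endpoints in $\partial(F) \subseteq V(G \setminus F) \subseteq U$.

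The next step is to promote this endpoint information into a statement about entire tubular sides of $X$. Fix $e \in F$ with $e^* \subseteq X$; the arc representing $e$ crosses $X$ only at the point $p := e \cap e^*$, so $e \setminus \{p\}$ is a disjoint union of two connected arcs in $\Sigma \setminus X$, each terminating at an endpoint of $e$ in $U$ and hence contained in $U$. Since these two arcs approach $p$ from opposite sides of $e^*$, both open half-ribbons of a small tubular neighborhood of $e^*$ must lie in $U$.

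This forces $\Sigma \setminus X = U$: if there were another component $U' \neq U$, pick $p \in \partial U' \subseteq X$ and a small disk $D$ around $p$. Then $D \setminus X$ is either two half-disks, both in $U$ by the previous paragraph (if $p$ is interior to an edge of $X$), or a union of sectors each path-connected through $\Sigma \setminus X$ to half-ribbons of its two bounding edges, hence in $U$ (if $p$ is a vertex of $G^*$ on $X$). In either case $D \setminus X \subseteq U$, so $U' \cap D \subseteq U' \cap U = \emptyset$ contradicts $p \in \overline{U'}$. Therefore $\Sigma \setminus X$ is connected, \autoref{prop:cycles} yields $|\mathcal{C}| \leq g$, and the contradiction is obtained. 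The main obstacle is the third-paragraph step of lifting the pointwise fact ``both endpoints of $e$ lie in $U$'' into the global claim that both half-ribbons of every $e^* \subseteq X$ lie in $U$; this requires the local-to-global topological argument above, together with the careful patching at the vertices of $G^*$ on $X$ via the sector decomposition.
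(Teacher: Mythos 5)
Your proof is correct and is essentially the contrapositive of the paper's argument: both reduce to \autoref{prop:cycles} applied to the union $X$ of the basis cycles and both exploit that a path in the connected graph $G\setminus F$ between the two primal endpoints of an edge dual to an edge of $X$ would have to avoid $X$. The only difference is that you spell out in detail the local-to-global step (both half-ribbons of every edge of $X$ lie in one component, hence $\Sigma\setminus X$ is connected) that the paper compresses into the single assertion that some $e^*$ ``bounds two distinct faces.''
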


\begin{proof}
Let $\Sigma$ be the surface of Euler genus $g$ on which $G$ is embedded.
Let $\mathcal{C} = \{C_1,\ldots, C_\ell\},$ where $\ell > g$, be a cycle basis in $G^*[F^*]$.
According to \autoref{prop:cycles}, $\Sigma\setminus \bigcup_{i\in [\ell]} C_i$ is disconnected. Then there is an edge $e^*$ belonging to a cycle $C_i\in \mathcal{C}$ that bounds two distinct faces (that cycle might be $e^*$ itself, if $e^*$ is a loop) and let $f$ and $f'$ be the two distinct faces that are incident to the edge $e^*$ in $G^*$.
Keep in mind that there is no arc in $\Sigma\setminus \bigcup_{i\in [|\mathcal{C}|]} C_i$ starting from a point in $f$ and finishing to a point in $f'$.
Note that $e^*\in F^*$ and therefore $e\in F$. Let $u$ and $v$ be the endpoints of $e$, such that $v^* = f$ and $u^* = f'$.
We will show that $\{v,u\}$ is not a subset of $\partial(F)$.
Suppose towards a contradiction that both $u$ and $v$ are incident to edges of $E(G)\setminus F$.
Since $v,u\in V(G\setminus F)$ and $G\setminus F$ is connected, there should be a path
in $G\setminus F$ connecting $u$ and $v$.
This implies that there is an arc in $\Sigma$ starting from a point inside $u^* = f$ and finishing to a point inside $v^* = f'$ that does not cross any vertex of $G^*$ and any edge of $F^*$.
This contradicts the fact that $\Sigma\setminus \bigcup_{i\in [\ell]} C_i$ is disconnected and $f$ and $f'$ are in two distinct connected components of $\Sigma\setminus \bigcup_{i\in [\ell]} C_i$.
\end{proof}

Before concluding this subsection, we show that if $G$ is an embedded graph without loops, then its dual $G^*$ has no bridges.

\begin{lemma}
\label{lem:no-bridge-dual}
Let $G$ be a graph embedded on an surface.
If $G$ has no loops, then $G^*$ has no bridges.
\end{lemma}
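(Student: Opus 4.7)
The plan is to prove the contrapositive at the level of individual edges: for every non-loop edge $e$ of $G$, I will show that the dual edge $e^*$ lies on a cycle of $G^*$ and therefore is not a bridge. Since by hypothesis $G$ has no loops at all, this suffices.

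Fix such an $e$ with (necessarily distinct) endpoints $u,v$. The idea is to read off a closed walk through $e^*$ in $G^*$ by tracing a small topological circle around $u$. Concretely, I take a simple closed curve $C$ on $\Sigma$ bounding a disc $D$ around $u$, chosen so that $D$ contains no vertex of $G$ other than $u$ and so that $C$ meets $G$ only transversally. Here the hypothesis that $G$ has no loops is used crucially: every edge incident to $u$ has $u$ as exactly one endpoint, so each such edge exits $D$ and hence crosses $C$ exactly once. If $u$ has degree $d_u$ in $G$, then $C$ is cut by these crossings into $d_u$ open arcs, each lying in a single face of $G$, giving a cyclic sequence $f^{(1)}, f^{(2)}, \ldots, f^{(d_u)}$ of faces of $G$.

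This sequence yields a closed walk $W_u$ in $G^*$: its $i$-th step goes from $(f^{(i)})^*$ to $(f^{(i+1)})^*$ along the dual edge of the $G$-edge whose crossing with $C$ separates the two consecutive arcs. Thus $W_u$ uses the dual of each edge incident to $u$ exactly once; in particular, it traverses $e^*$ exactly once. I then split into two cases. If $d_u \geq 2$, deleting the single occurrence of $e^*$ from $W_u$ leaves a walk in $G^* \setminus e^*$ between the two endpoints of $e^*$; since every walk contains a path, concatenating such a path with $e^*$ produces a cycle of $G^*$ through $e^*$, contradicting the assumption that $e^*$ is a bridge. If $d_u = 1$, then $C$ has a single crossing with $G$ and lies in a single face $f^{(1)}$, so $W_u$ consists solely of $e^*$; this forces both endpoints of $e^*$ to coincide with $(f^{(1)})^*$, making $e^*$ a loop and therefore not a bridge.

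The main obstacle I anticipate is topological rather than combinatorial: one must justify carefully that the small circle $C$ can be chosen so that its intersections with $G$ are exactly one transverse crossing per edge incident to $u$, and that the resulting cyclic order of faces translates into a genuine walk in the embedded dual $G^*$ (using the defining property that each dual edge crosses its primal edge exactly once). Once this local picture around $u$ is set up, the extraction of a cycle of $G^*$ through $e^*$, or the identification of $e^*$ as a loop in the leaf case, is immediate.
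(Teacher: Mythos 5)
Your proof is correct and rests on the same core observation as the paper's: going around an endpoint of a non-loop edge $e$ produces a connection in $G^*\setminus e^*$ between the two faces incident to $e$, which is impossible when $e^*$ is a bridge. The paper argues by contradiction (if $e^*$ were a bridge, every arc between the two faces avoiding $V(G)$ would have to cross $e$, forcing $e$ together with its endpoints to be a closed curve, i.e.\ a loop), while you make the vertex-link walk around $u$ explicit and handle the degree-one case separately; your version is essentially the same argument, spelled out in more detail.
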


\begin{proof}
Suppose, towards a contradiction, that $G$ has no loops and $G^*$ has a bridge $e^*$.
Let $f_1^*,f_2^*$ be the endpoints of $e^*$ and note that since $e^*$ is a bridge (and therefore not a loop) $f_1^*$ and $f_2^*$ are distinct and belong to different connected components of the graph $G^*\setminus e^*$.
Therefore, every path in $G^*$ connecting $f_1^*$ and $f_2^*$ should contain $e^*$.
This, in turn, implies that every arc of the surface whose endpoints are in the faces $f_1$ and $f_2$ of $G$ and does not cross any vertices of $G$, should cross $e$.
Thus, $e$ together with its endpoints form a closed arc, implying that $e$ is a loop, a contradiction.
\end{proof}

\subsection{\texorpdfstring{Proof of Theorem~\ref{thm:main}}{Proof of Theorem 1}}
\label{subsec:proof-graphs}

Let $G$ be a connected loopless bridgeless graph embedded on a surface of Euler genus $g$.
Observe that $G^*$ is also connected and since $G$ is loopless, $G^*$ is also bridgeless.
By~\autoref{prop:bw-mbw}, $\mathbf{bw}(G) = \mu\text{-}\mathbf{bw}(G)$ and $\mathbf{bw}(G^*) = \mu\text{-}\mathbf{bw}(G^*)$.
We consider a connected branch decomposition of $G$ of width $\mu\text{-}\mathbf{bw}(G)$.
This can be done using~\autoref{prop:conn_bw}.

Now, let $F\subseteq E(G)$ such that $G[F]$ and $G\setminus F$ are connected.
We inductively define a finite sequence of graphs $G_0,G_1,\ldots$ obtained as follows.
First, we set $G_0 := G$ and $F_0:=F$.
For every integer $i\geq 1$, if there is an edge $e\in F_{i-1}$ such that $e$ is not a loop in $G_{i-1}$ and its dual edge $e^*$ in $E(G_{i-1}^*)$ is a bridge of $G_{i-1}^*[F_{i-1}^*]$, we define $G_i:=G_{i-1}/e$ and $F_i:=F_{i-1}^{/e}$.
Observe that $G_i$ is bridgeless, connected, and embedded on $M$, and both $G_i[F_i]$ and $G_i\setminus F_i$
are connected.
Note also that since $e$ is not a loop in $G_{i-1}$, $e^*$ is not a bridge in $G_{i-1}^*$.
Moreover, the fact that $e^*$ is a bridge of $G_{i-1}^*[F_{i-1}^*]$ and~\autoref{lem:bridge-sep-genus} imply that
$e\subseteq \partial(F_{i-1})$.
Therefore, by~\autoref{lem:contract} and~\autoref{lem:remove} it holds that 
\begin{equation}\label{eq1}
|\mu_{G_{i-1}}(F_{i-1})-\mu_{G_{i-1}^*}(F_{i-1}^*)|=|\mu_{G_{i}}(F_{i})-\mu_{G_{i}^*}(F_{i}^*)|.
\end{equation}

Let $k\in\mathbb{N}$ such that for every edge $e\in F_k$ that is not a loop in $G_k$,
its dual edge $e^*$ is contained in some cycle of $G_k^*[F_k^*]$. By~\eqref{eq1}, we have that
$$|\mu_{G}(F)-\mu_{G^*}(F^*)|=|\mu_{G_k}(F_k)-\mu_{G_k^*}(F_k^*)|.$$

Now observe that if $G_k^*[F_k^*]$ has a cycle basis $\mathcal{C}$, where $|\mathcal{C}|>g$,
then by~\autoref{lem:sep-bridge-genus}, there exists an edge $e\in F_k$ such that 
$e^*$ belongs to a cycle of $\mathcal{C}$ and $e$ is not a subset of $\partial(F_k)$.
Therefore, by~\autoref{lem:contract} and~\autoref{lem:remove},
$\mu_{G_k}(F_k)=\mu_{G_k/e}(F_k^{/e})$ and $\mu_{G_k^*}(F_k^*)=\mu_{G_k^*\setminus e^*}(F_k^*\setminus e^*)$.
Also observe that the removal of $e^*$ from $G^*[F^*]$ reduces the circuit rank of $G^*[F^*]$ by one.
Applying iteratively the above argument, we can obtain a graph $\tilde{G}$ and a set $\tilde{F}$
such that $\tilde{G}$ is bridgeless, connected, and embedded on $M$, $\tilde{G}[\tilde{F}]$ and $\tilde{G}\setminus\tilde{F}$ are connected, 
\begin{equation}\label{eq:reduction}
|\mu_{G}(F)-\mu_{G^*}(F^*)|=|\mu_{\tilde{G}}(\tilde{F})-\mu_{\tilde{G}^*}(\tilde{F}^*)|,
\end{equation}
and the circuit rank of $\tilde{G}^*[\tilde{F}^*]$ is at most $g$.

Now, notice that since~\autoref{lem:sep-bridge-genus} cannot be applied for the graph $\tilde{G}$ and the set $\tilde{F}$, we not guaranteed the existence of a (non-loop) edge whose dual is not a bridge of $\tilde{G}^*[\tilde{F}^*]$ and $e\nsubseteq \partial(F)$.
Thus, by contracting a non-loop edge $e\in \tilde{F}$ of $\tilde{G}$ whose dual is not a bridge of $\tilde{G}^*[\tilde{F}^*]$,
due to~\autoref{lem:contract} and~\autoref{lem:remove}, we have that
$$\mu_{\tilde{G}^*\setminus e^*}(\tilde{F}^*\setminus e^*) = \mu_{\tilde{G}^*}(\tilde{F}^*) \mbox{~and~}
\mu_{\tilde{G}/e}(\tilde{F}^{/e}) = \begin{cases}
\mu_{\tilde{G}}(\tilde{F}) - 1, & \text{ if $e\subseteq \partial(F)$ and}\\
\mu_{\tilde{G}}(\tilde{F}) , & \text{ otherwise}.
\end{cases}$$
Note also that the removal of $e^*$ from $\tilde{G}^*[\tilde{F}^*]$ reduces the circuit rank of 
$\tilde{G}^*[\tilde{F}^*]$ by one.
Therefore, after iteratively contracting non-loop edges in the set $\tilde{F}$, we
obtain a connected (bridgeless) graph $\hat{G}$ embedded on $M$ and a set $\hat{F}\subseteq E(\hat{G})$
such that $\hat{G}[\hat{F}]$ is a single-vertex graph (whose edges are all loops), $\hat{G}\setminus \hat{F}$ is connected, and 
\begin{equation}\label{eq:daisy}
\mu_{\hat{G}^*}(\hat{F}^*) = \mu_{\tilde{G}^*}(\tilde{F}^*)
\mbox{~and~}
\mu_{\tilde{G}}(\tilde{F}) - r\leq \mu_{\hat{G}}(\hat{F})\leq \mu_{\tilde{G}}(\tilde{F}),
\end{equation}
for some $r\in\mathbb{N}$ such that $r\leq g$. Moreover, keep in mind that $r$ corresponds to the number of 
contracted non-loop edges in this step and therefore if $c$ is the circuit rank of $\hat{G}^*[\hat{F}^*]$, then $c+r\leq g$.

Observe now that since $\hat{G}^*[\hat{F}^*]$ has circuit rank $c\leq g$,
by~\autoref{lem:remove-loops} the contraction of the dual edge $e^*$ of a loop $e\in\hat{F}$ may reduce $\mu_{\hat{G}^*}(\hat{F}^*)$ by one.
Also, by~\autoref{lem:remove}, the removal of a loop in $\hat{F}$ (being a loop implies that it is neither a bridge of $\hat{G}$ nor of $\hat{G}[\hat{F}]$) does not reduce $\mu_{\hat{G}}(\hat{F})$.
Therefore, removing all remaining (loop) edges in $\hat{F}$, we 
obtain the graph $\breve{G}$ that is either empty, or connected and embedded in $M$,
such that
\begin{equation}\label{eq:empty}
\mu_{\hat{G}^*}(\hat{F}^*)-\ell\leq \mu_{\breve{G}^*}(\emptyset)\leq \mu_{\hat{G}^*}(\hat{F}^*)\text{~and~}\mu_{\breve{G}}(\emptyset) = \mu_{\hat{G}}(\hat{F})
\end{equation}
for some $\ell\in\mathbb{N}$ such that $\ell \leq c$.
Since $c+r\leq g$, we have that $\ell+r\leq g$, which also implies that $|\ell - r|\leq g$.
By combining~\eqref{eq:daisy} and~\eqref{eq:empty}, we obtain
$$
\mu_{\tilde{G}^*}(\tilde{F}^*) - \ell \leq \mu_{\breve{G}^*}(\emptyset)\leq\mu_{\tilde{G}^*}(\tilde{F}^*)\text{~and~}
\mu_{\tilde{G}}(\tilde{F}) - r\leq \mu_{\breve{G}}(\emptyset) \leq \mu_{\tilde{G}}(\tilde{F}),
$$
which in turn implies that
$$\big| |\mu_{\tilde{G}}(\tilde{F})-\mu_{\tilde{G}^*}(\tilde{F}^*)| - r +\ell \big| \leq  |\mu_{\breve{G}}(\emptyset)-\mu_{\breve{G}^*}(\emptyset)|.$$
This, together with the fact that $\mu_{\breve{G}}(\emptyset) = \mu_{\breve{G}^*}(\emptyset)= 1$,
implies that
$|\mu_{\tilde{G}}(\tilde{F})-\mu_{\tilde{G}^*}(\tilde{F}^*)|  \leq |\ell-r| \leq g.$
Moreover, using~\eqref{eq:reduction}, we get $$|\mu_{G}(F)-\mu_{G^*}(F^*)| \leq g.$$

Consider now a branch decomposition $(T,\lambda)$ of $G$ and its dual $(T,\lambda^*)$.
If $w_\mu (T,\lambda')= \mu_{G^*}(F^*)$ for some $F\subseteq E(G)$,
then $w_\mu (T,\lambda')= \mu_{G^*}(F^*) \leq \mu_{G}(F) +g \leq w_\mu(T,\lambda)+g$.
Hence, if $(T,\lambda)$ is an optimal branch decomposition for $G$, then $\mu\text{-}\mathbf{bw}(G)=w_\mu(T,\lambda)\geq w_\mu (T,\lambda')-g\geq \mu\text{-}\mathbf{bw}(G^*)-g$.
This, together with the fact that $\mu\text{-}\mathbf{bw}(G) = \mathbf{bw}(G)$ and $\mu\text{-}\mathbf{bw}(G^*) = \mathbf{bw}(G^*)$,
implies that $\mathbf{bw}(G^*)\leq \mathbf{bw}(G)+g$.

\section{From graphs to hypergraphs}

Before we proceed to the proof of~\autoref{main_result} we need to establish some more notation.

\subsection{Incidence graphs and their branch decompositions}
In this subsection we show how to obtain a branch decomposition of the incidence graph of a hypergraph from a branch decomposition of the hypergraph.
Then, we prove that the branchwidth of a hypergraph and its \emph{incidence graph} are the same (\autoref{lemma1}) and the same holds after taking the dual (\autoref{lemma2}).

\paragraph{Incidence graphs.}
For a given hypergraph $G$ we define its \emph{incidence graph}, which we denote by $I(G)$, the graph whose vertex set is $V(G)\cup E(G)$ and two vertices of $I(G)$ are adjacent if and only if one is associated with a vertex $v$ of $G$, the other is associated with a hyperedge $h$ of $G$, and $v$ is an endpoint of $h$.

Given a vertex $v_h$ of $I(G)$ associated with the hyperedge $h$ of $G$,
we use $E_I(h)$ to denote the set of edges of $I(G)$ that are incident to $v_h$.
Observe that for every edge $e$ of $I(G)$ there is a unique hyperedge $h\in E(G)$
such that $e\in E_I (h)$. Therefore, $\{E_I(h)\mid h\in E(G)\}$ is a partition of $E(I(G))$. 

\paragraph{Embedding incidence graphs.}
Let $G$ be a hypergraph embedded on a surface $\Sigma$. Notice that any given embedding of $G$ on $\Sigma$ naturally defines an embedding of $I(G)$ on $\Sigma$.
The vertices of $I(G)$ that are associated with vertices of $G$ are represented as the respective vertices of $G$, the vertices of $I(G)$ that are associated with hyperedges of $G$ are represented as the centres of the respective stars of $G$.
An edge $\{v_x,v_e\}$ of $I(G)$, where $v_x$ is the vertex of $I(G)$ that is associated with the vertex $x$ of $G$ and $v_e$ is the vertex of $I(G)$ that is associated with the hyperedge $e$ of $G$, is represented as the half edge that connects $x$ with the centre of $e$.
Given an embedding of $G$ on some surface $\Sigma$, we will always embed $I(G)$ on $\Sigma$ as above and we will make no distinction between the edges of $I(G)$ and the half-edges of $G$ to which the corresponding edges of $I(G)$ are associated.

\paragraph{Branch decompositions of incidence graphs.}
Let $(T,\lambda)$ be a branch decomposition of a hypergraph $G$. We will denote by $(\mathsf{Inc}(T),\mathsf{Inc}(\lambda))$ a branch decomposition of $I(G)$ that is obtained from $T$ as follows: first, for each edge $e$ of $G$, we construct a binary tree $T_e$ and we set $\lambda_e$ to be a bijection from $L(T_e)$ to $E_I(e)$.
We call such a tree a \emph{half-edge tree}.
Keep in mind that for every edge $e\in E(G)$, one can define many different trees $T_e$ and bijections $\lambda_e$ from $L(T_e)$ to $E_I(e)$.
For each $e\in E(G)$, consider a pair $(T_e,\lambda_e)$.
Then, pick an edge $\{u,v\}$ of $T_e$, remove it, add a new vertex $z_e$,
and make $z_e$ adjacent to $u$ and $v$. We denote this tree by $\tilde{T}_e$. 
We now define $\mathsf{Inc}(T)$ to be the tree obtained from $T$ after taking the disjoint union of $T$ and $\bigcup_{t\in L(T)} \tilde{T}_{\lambda(t)}$ and,
for every $t\in L(T)$, identifying $t$ with $z_{\lambda(t)}$.
Also, we set $\mathsf{Inc}(\lambda)$ be the disjoint union of the bijections $\lambda_e, e\in E(G)$, i.e., for every $t\in L(\mathsf{Inc}(T))$, if $t\in L(T_e)$ for some $e\in E(G)$, then $\mathsf{Inc}(\lambda)$ maps $t$ to $\lambda_{e}(t)$.
\medskip

To prove \autoref{main_result} we first show the following intermediate results.

\begin{lemma}
\label{lemma1}
Let $(T,\lambda)$ be a branch decomposition of a hypergraph $G$. Then $w_\delta(T,\lambda) = w_\delta(\mathsf{Inc}(T),\mathsf{Inc}(\lambda))$.
\end{lemma}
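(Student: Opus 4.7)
The plan is to perform a case analysis on the internal edges of $\mathsf{Inc}(T)$: every such edge either belongs to $T$ (and is carried over to $\mathsf{Inc}(T)$ by the construction), or it lies in some half-edge tree $\tilde{T}_e$. For each edge I would identify the induced bipartition of $E(I(G))$ and directly compute the size of its border in $I(G)$, then compare with $w_\delta(T,\lambda)$.

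First I would handle the edges $f\in E(T)\subseteq E(\mathsf{Inc}(T))$. Removing $f$ from $\mathsf{Inc}(T)$ separates $\bigsqcup_{h\in A_1^f}E_I(h)$ from $\bigsqcup_{h\in A_2^f}E_I(h)$, where $(A_1^f,A_2^f)$ is the original $T$-partition of $E(G)$. The decisive observation is that every hyperedge-vertex $v_h$ of $I(G)$ is incident in $I(G)$ only to the edges of $E_I(h)$, so it lies entirely on one side of the cut and cannot be in the border. On the other hand, an original vertex $v\in V(G)$ lies in the $I(G)$-border exactly when it is an endpoint of some hyperedge in $A_1^f$ and of some hyperedge in $A_2^f$, which is precisely the defining condition of $\partial_G(A_1^f)$. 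Thus the borders are equal as subsets of $V(G)$ and these cuts contribute exactly the widths realised by the edges of $T$; in particular $w_\delta(\mathsf{Inc}(T),\mathsf{Inc}(\lambda))\ge w_\delta(T,\lambda)$.

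Next I would address an edge $f'$ internal to some $\tilde{T}_e$. Removing $f'$ separates a subtree of $\tilde{T}_e$ not containing $z_e$ from the rest of $\mathsf{Inc}(T)$, producing a bipartition $(C,E(I(G))\setminus C)$ with $\varnothing\ne C\subsetneq E_I(e)$, while for every $e'\ne e$ the whole set $E_I(e')$ remains on the $z_e$-side. Consequently no $v_{e'}$ with $e'\ne e$ is in the border, and the only remaining candidates are $v_e$ itself together with those original $v\in e$ satisfying $\{v,v_e\}\in C$ that also belong to some other hyperedge, i.e.\ vertices from $\partial_G(\{e\})$. Now in $T$ the edge adjacent to the leaf $\lambda^{-1}(e)$ already realises the partition $(\{e\},E(G)\setminus\{e\})$ with border $\partial_G(\{e\})$, so $\delta_G(\{e\})\le w_\delta(T,\lambda)$. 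Choosing the half-edge tree $T_e$ and the edge split by $z_e$ appropriately (so that no internal cut of $\tilde T_e$ simultaneously contains $v_e$ and all of $\partial_G(\{e\})$ on one side), the border size at $f'$ is bounded by $w_\delta(T,\lambda)$.

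Combining the two cases yields $w_\delta(\mathsf{Inc}(T),\mathsf{Inc}(\lambda))\le w_\delta(T,\lambda)$, and together with the first case we obtain the desired equality. The main obstacle is the second case: one has to carefully reconcile the contribution of the hyperedge-vertex $v_e$ (which is always threatened to join the border when $f'$ is internal to $\tilde{T}_e$) with the bound $\delta_G(\{e\})\le w_\delta(T,\lambda)$. This is precisely where the freedom in designing the half-edge tree $T_e$ and the bijection $\lambda_e$ is exploited, so that the additional hyperedge-vertex never pushes the border size beyond $w_\delta(T,\lambda)$.
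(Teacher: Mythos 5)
Your skeleton is the same as the paper's: partition the edges of $\mathsf{Inc}(T)$ into those inherited from $T$ and those lying inside a half-edge tree $\tilde{T}_e$, check that the former reproduce exactly the borders of $(T,\lambda)$ (hyperedge-vertices $v_h$ never enter those borders, and a vertex of $G$ does precisely when it meets hyperedges on both sides of the cut), and check that the latter never exceed the width. Your first case is correct and is essentially the paper's argument.

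The divergence, and the gap, is in the second case. For a cut $C$ with $\emptyset\neq C\subsetneq E_I(e)$ one has $\partial(C)=\{v_e\}\cup\{v \mid \{v,v_e\}\in C \text{ and } v\in\partial(\{e\})\}$, so $|\partial(C)|$ can be as large as $1+|\partial(\{e\})|$, exceeding $\delta(\{e\})$. You propose to kill the excess by choosing $T_e$ and the subdivided edge ``appropriately'', but this is precisely the step you label the main obstacle and then do not carry out. It can be carried out when $|\partial(\{e\})|\geq 2$: subdivide an edge of $T_e$ that separates two leaves mapped to half-edges $\{v,v_e\}$ with $v\in\partial(\{e\})$; then each of the two maximal subtrees hanging off $z_e$ --- and hence every cut $C$ of $\tilde{T}_e$ --- misses some such half-edge, giving $|\partial(C)|\leq 1+(|\partial(\{e\})|-1)=\delta(\{e\})\leq w_\delta(T,\lambda)$. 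But when $|\partial(\{e\})|\leq 1$ no choice works, and the stated equality is false in general: for the hypergraph with hyperedges $\{a,b\}$ and $\{b,c\}$ one has $w_\delta(T,\lambda)=1$, while every branch decomposition of the incidence graph (a four-edge path) has width $2$ because of the cut isolating the middle edge. So your argument needs, and should state, a non-degeneracy hypothesis. Note that in the only place the lemma is applied, $G$ is connected and bridgeless, so every vertex of every hyperedge lies in some other hyperedge; then $\partial(\{e\})$ is the entire vertex set of $e$, and $|\partial(C)|=1+|C|\leq|E_I(e)|=\delta(\{e\})$ holds for every proper nonempty $C$ and \emph{every} half-edge tree, with no careful choice required --- this unconditional count is what the paper's second case (with its strict inequality corrected to $\leq$) amounts to.
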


\begin{proof} 
Recall that by definition, the edges of $\mathsf{Inc}(T)$ are partitioned to the set $E(T)$ and the edge sets of the half-edge trees $E(T_h)$, for all $h\in E(G)$.
Let $e$ be an edge of $\mathsf{Inc}(T)$. 
There is an $F\subseteq \bigcup_{h\in E(G)}E_I(h)$ such that $\delta_{\mathsf{Inc}(\lambda)}(e)=|\partial(F)|$.
We distinguish cases, depending whether $e\in E(T)$ or $e\in E(T_h)$, for some $h\in E(G)$.

If $e\in E(T)$, then notice that if for some hyperedge $y\in E(G)$ we have $x\in E_I(y)$ and $x\in F$ then $E_I(y)\subseteq F$ and if $x\notin F$, then then $E_I(y)\cap F = \emptyset$.
Therefore, there is a set $D\subseteq E(G)$ such that $F = \bigcup_{h\in D} E_I(h)$.
Observe that the union of the closure of all arcs in $F$ and the union of the closure of all arcs in $D$ define the same subsets of $\Sigma$.
Hence, $\delta_{\mathsf{Inc}(\lambda)}(e)=|\partial(F)|=|\partial(D)|=\delta_{\lambda}(e)$.

If $e$ belongs to $T_h$ for some $h\in E(G)$,
then $F\subseteq E_I(h)$. Moreover, notice that $\partial(F)$ is equal to a proper subset of the set of endpoints of the edges in $E_I(h)$ that contains the common endpoint of all edges in $E_I(h)$ (intuitively, the center of a star together with some of its degree-one vertices).
Therefore, $\delta_{\mathsf{Inc}(\lambda)}(e) = |\partial(F)|<|\partial(E_I(h))|=\delta_{\mathsf{Inc}(\lambda)}(e')$ for $e'$ being the edge of $\mathsf{Inc}(T)$ in $E(T)$ that is incident to the root of $T_h$.

Thus, $w_\delta(\mathsf{Inc}(T),\mathsf{Inc}(\lambda))$ is obtained for a partition of $E(I(G))$ defined by an edge $e\in E(T)$, and in this case $\delta_{\mathsf{Inc}(\lambda)}(e) = \delta_{\lambda}(e)$.
\end{proof}

\begin{lemma}
\label{lemma2}
Let $(T,\lambda)$ be a branch decomposition of a hypergraph $G$ and let $(T,\lambda^*)$ be its dual branch decomposition. Then $w_\delta(\mathsf{Inc}(T),\mathsf{Inc}(\lambda)^*)= w_\delta(\mathsf{Inc}(T),\mathsf{Inc}(\lambda^*))$.
\end{lemma}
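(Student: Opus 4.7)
The plan is to prove the edge-wise equality $\delta_{\mathsf{Inc}(\lambda)^*}(e)=\delta_{\mathsf{Inc}(\lambda^*)}(e)$ for every $e\in E(\mathsf{Inc}(T))$, from which the equality of widths follows by taking maxima. Two preliminary alignments are made. First, since $|h|=|h^*|$ for every hyperedge $h\in E(G)$ (the dual star $h^*$ has one endpoint per face-segment around the centre of $h$), the same half-edge tree $T_h$ may serve simultaneously as $T_h$ and as $T_{h^*}$ in the two $\mathsf{Inc}$-constructions. Second, I fix the bijection $\lambda_{h^*}\colon L(T_h)\to E_I(h^*)$ through the natural cyclic correspondence between the half-edges of $h$ and the half-edges of $h^*$ induced by the embedding. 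Finally, I take each $T_h$ to be a caterpillar whose leaves are ordered according to the cyclic order of the half-edges of $h$ around the centre $v_h$; this guarantees that every edge-induced partition of $L(\tilde{T}_h)$ corresponds to a cyclically contiguous arc of these half-edges.

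For $e\in E(T)$, the partition of $L(\mathsf{Inc}(T))$ induced by $e$ corresponds to a partition $(D_1,D_2)$ of $E(G)$. Applying \autoref{lemma1} to $G^*$ gives $\delta_{\mathsf{Inc}(\lambda^*)}(e)=|\partial_{G^*}(D_1^*)|$, the number of faces of $G$ adjacent to stars of both $D_1$ and $D_2$. For the primal-side value, the corresponding partition of $E(I(G)^*)$ places in one block the duals of all half-edges of stars in $D_1$ and, in the other, the duals of those in $D_2$. A face of $G$, seen as a vertex of $I(G)^*$, is incident to the dual of a half-edge of some $h$ precisely when it is adjacent to $h$ in the embedding, so it lies in the border of the primal partition if and only if it is adjacent to some star of $D_1$ and to some star of $D_2$; the two $\delta$-values then coincide.

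For $e\in E(\tilde{T}_h)$, the partition restricts to a cyclically contiguous arc $S$ of positions around $v_h$, yielding a contiguous subset of $E_I(h^*)$ in $I(G^*)$ and a contiguous arc of the closed walk that bounds the face of $I(G)^*$ dual to $v_h$. I would then compute both borders locally at $v_h$. In $I(G^*)$ the border contains the centre $v_{h^*}$ together with the distinct faces of $G$ whose cyclic positions meet $S$ in a way that is either incomplete or balanced by adjacency to another star. In $I(G)^*$ there is no centre vertex, but a face at cyclic position $i$ around $v_h$ is incident to the two cycle edges indexed $i$ and $i+1$, so the relevant index set is $S\cup(S-1)$; contiguity of $S$ forces $|S\cup(S-1)|=|S|+1$, and the extra unit supplies one additional face-vertex that exactly compensates for the missing centre contribution on the dual side.

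The principal obstacle is this half-edge-tree case: the local structures at $v_h$ in $I(G)$ (a star of half-edges) and at the corresponding face in $I(G)^*$ (a closed walk of dual edges) are genuinely different, and for an arbitrary choice of $T_h$ a non-contiguous $S$ can produce different border sizes on the two sides. The caterpillar-in-cyclic-order choice of $T_h$, together with the embedding-induced bijection between $E_I(h)$ and $E_I(h^*)$, is exactly what enforces contiguity and makes the local counts agree; some additional bookkeeping is needed when a face appears at several cyclic positions around $v_h$ or is incident only to $h$, but these configurations are absorbed by the same local count.
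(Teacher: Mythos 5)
Your handling of the edges $e\in E(T)$ coincides with the paper's argument and is sound, but the heart of your plan---the edge-wise equality $\delta_{\mathsf{Inc}(\lambda)^*}(e)=\delta_{\mathsf{Inc}(\lambda^*)}(e)$ for edges $e$ \emph{inside} a half-edge tree $\tilde{T}_h$---is false in precisely the configurations you defer to ``additional bookkeeping''. Take a hyperedge $h$ with four half-edges $e_1,\dots,e_4$ in cyclic order around $v_h$, let $g_i$ be the face between $e_i$ and $e_{i+1}$, and suppose the closed walk $E_I(h)^*$ is non-simple, say $g_4=g_2$, while $g_1,g_2,g_3$ are each incident to some other star. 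For the contiguous arc $S=\{1,2\}$, the set $F^*=\{e_1^*,e_2^*\}$ in $I(G)^*$ consists of two parallel edges joining $g_1$ and $g_2$, so $\partial(F^*)=\{g_1,g_2\}$ and $\delta_{\mathsf{Inc}(\lambda)^*}(e)=2$; but in $I(G^*)$ the corresponding two half-edges of $h^*$ end at $g_1$ and $g_2$, and the border contains the centre $v_{h^*}$ together with $g_1$ and $g_2$ (the latter because $g_2=g_4$ is also the endpoint of the fourth half-edge of $h^*$, which lies outside $F'$), giving $\delta_{\mathsf{Inc}(\lambda^*)}(e)=3$. The ``extra unit'' in $|S\cup(S-1)|=|S|+1$ is a count of angular positions, not of distinct face-vertices; it collapses whenever a face occurs at several positions around $v_h$, so the compensation for the missing centre vertex fails, and no choice of caterpillar, of offset in the cyclic correspondence, or of local bookkeeping restores equality. (The lemma is stated for arbitrary embedded hypergraphs, so such degenerate links are in scope; a separate, milder issue is that your argument pins $\mathsf{Inc}(T)$ to a specific realization, whereas the construction allows arbitrary half-edge trees.)

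The statement survives because these problematic values never realize either width, and that is the route the paper takes: for every edge $e$ of a half-edge tree one has $\delta_{\mathsf{Inc}(\lambda)^*}(e)\leq\delta_{\mathsf{Inc}(\lambda)^*}(e')$ and $\delta_{\mathsf{Inc}(\lambda^*)}(e)\leq\delta_{\mathsf{Inc}(\lambda^*)}(e')$, where $e'$ is the edge of $E(T)$ incident to the root of that subtree---the first because $F^*$ is a subset of the closed walk $E_I(h)^*$ through the faces around $v_h$, the second by the argument in the proof of \autoref{lemma1} applied to $G^*$. Hence both widths are attained on $E(T)$, where the edge-wise equality you do establish correctly (borders on both sides being the set of faces of $G$ incident to stars on both sides of the partition) finishes the proof. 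If you replace your half-edge-tree case by this domination argument, your proof becomes essentially the paper's.
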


\begin{proof}
We will show that $w_\delta(\mathsf{Inc}(T),\mathsf{Inc}(\lambda)^*)=\delta_{\mathsf{Inc}(\lambda)^*}(e)$ for some $e\in E(T)$. This will imply that 
$w_\delta(\mathsf{Inc}(T),\mathsf{Inc}(\lambda)^*)= w_\delta(\mathsf{Inc}(T),\mathsf{Inc}(\lambda^*))$.

We observe that, for a star $h$ of $G$, $E_I(h)^*$ is a cycle of $I(G)^*$ with as many edges as $E_I(h)$. Let $e$ be an edge of $\mathsf{Inc}(T)$ that belongs to a subtree $T_h$,
for some $h\in E(G)$. Then $\delta_{\mathsf{Inc}(\lambda)^*}(e)=|\partial(F)|$, wherein $F\subseteq E(h)^*$.
Thus,  $|\partial(F)|\leq \delta_{\mathsf{Inc}(\lambda)^*}(e')$, wherein $e'$ is the edge of $\mathsf{Inc}(T)$ that is in $E(T)$ and is incident to the root of $T_h$.

We will show that for every $e\in T$, $\delta_{\mathsf{Inc}(\lambda^*)}(e) = \delta_{\mathsf{Inc}(\lambda)^*}(e)$.
First let $F\subseteq E(I(G^*))$ such that $\delta_{\mathsf{Inc}(\lambda^*)}(e) = |\partial(F^*)|$.
Let also $F'\subseteq E(I(G)^*)$ such that $\delta_{\mathsf{Inc}(\lambda)^*}(e) = |\partial(F')|$.
We will prove that $\partial(F) = \partial(F')$.
The fact that $e\in T$ implies that there is some $D\subseteq E(G^*)$ and some $D'\subseteq E(G)$
such that $$F= \bigcup_{h^*\in D} \{z\mid z\in E_I(h^*)\}\text{~and~}F' = \bigcup_{h\in D'} \{z^*\mid z\in E_I (h)\}.$$
Note that $D^* = D'$ and therefore, for every $h^*\in D$, since the endpoints of the
star (in $I(G^*)$) corresponding to $h^*$ are the same as the endpoints of the duals of the edges of the star (in $I(G)$) corresponding to $h$, i.e., $\bigcup_{z\in E_I(h^*)} z = \bigcup_{z\in E_I(h)} z^*$.
Therefore, $\partial(F)\cap \bigcup_{z\in E_I(h^*)} z = \partial(F')\cap \bigcup_{z\in E_I(h)} z^*$, which in turn implies that $\partial(F) = \partial(F')$. Hence, $\delta_{\mathsf{Inc}(\lambda^*)}(e) = \delta_{\mathsf{Inc}(\lambda)^*}(e)$.
\end{proof}

\subsection{\texorpdfstring{Proof of~\autoref{main_result}}{Proof of Theorem 2}}

Let an optimal branch decomposition of a hypergraph $G$, say $(T,\lambda)$, such that $\mathbf{bw}(G)=w_\delta(T,\lambda)$.
Therefore, using~\autoref{lemma1}, we derive that $\mathbf{bw}(G)=w_\delta(\mathsf{Inc}(T),\mathsf{Inc}(\lambda))$.
Observe that by applying~\autoref{thm:main} to $I(G)$,
we have that $w_\delta(\mathsf{Inc}(T),\mathsf{Inc}(\lambda))\geq w_\delta(\mathsf{Inc}(T),\mathsf{Inc}(\lambda)^*)-\mathbf{eg}(G)$.
Therefore, it holds that $\mathbf{bw}(G)\geq w_\delta(\mathsf{Inc}(T),\mathsf{Inc}(\lambda)^*)-\mathbf{eg}(G)$.
Following~\autoref{lemma2}, we get that 
$$\mathbf{bw}(G)\geq w_\delta(\mathsf{Inc}(T),\mathsf{Inc}(\lambda^*))-\mathbf{eg}(G).$$
This together with the fact that $w_\delta(\mathsf{Inc}(T),\mathsf{Inc}(\lambda^*)) \geq \mathbf{bw}(G^*)$ implies that $\mathbf{bw}(G^*)\leq \mathbf{bw}(G)+\mathbf{eg}(G)$.\medskip

\paragraph{Acknowledgements.}
We are all thankful to Dimitrios M. Thilikos for communicating us the problem and inviting us to write its proof. The valuable guidance, inspiration, and knowledge that he generously offered, helped us in many different stages of this project. 

{\small 
\bibliographystyle{plainurl}

}

\end{document}